\newtheorem{thm}{Theorem}[section]
\newtheorem{theorem}[thm]{Theorem}
\newtheorem{corollary}[thm]{Corollary}
\newtheorem{lemma}[thm]{Lemma}
\theoremstyle{definition}
\theoremstyle{remark}
\newtheorem{remark}[thm]{Remark}
\newtheorem*{acknowledgements}{Acknowledgements}
\numberwithin{equation}{section}
\newcommand{\ab}{\mathbf{a}}
\newcommand{\bb}{\mathbf{b}}
\newcommand{\BR}{\mathrm{B}}
\newcommand{\BC}{\mathcal{B}}
\newcommand{\dr}{\mathrm{d}}
\newcommand{\de}{\mathrm{\delta}}
\newcommand{\DR}{\mathrm{D}}
\newcommand{\DC}{\mathcal{D}}
\newcommand{\eps}{\varepsilon}
\newcommand{\Hf}{\mathfrak{H}}
\newcommand{\HC}{\mathcal{H}}
\newcommand{\la}{\lambda}
\newcommand{\La}{\Lambda}
\newcommand{\LC}{\mathcal{L}}
\newcommand{\NR}{\mathrm{N}}
\newcommand{\Om}{\Omega}
\newcommand{\si}{\sigma}
\newcommand{\C}{\mathbb{C}}
\newcommand{\R}{\mathbb{R}}
\newcommand{\ess}{\mathrm{ess}}
\def\leq {\leqslant}
\def\geq {\geqslant}
\begin{document}

\title[Dirichlet and Neumann counting functions]
{On the comparison of the Dirichlet and Neumann counting functions}

\author{Y. Safarov}
\address{Department of Mathematics, King's College London,
Strand, London, UK} \email{yuri.safarov@kcl.ac.uk}

\date{February 2008}

\dedicatory{To Mikhail Shl\"emovich Birman on his 80-th birthday}

\thanks{The research was supported by the EPSRC grant GR/T25552/01.}
\subjclass{47A75, 35P15}

\keywords{Dirichlet and Neumann eigenvalues, counting function,
boundary value problems}

\date{August 2008}

\maketitle

\section*{Introduction}\label{I}

Let $N_\NR(\la)$ and $N_\DR(\la)$ be the counting functions of the
Dirichlet and Neumann Laplacian on a domain $\Om\subset\R^n$. If
$\,\la\,$ is not a Dirichlet or Neumann eigenvalue then
\begin{equation*}
N_\NR(\la)=N_\DR(\la)+g^-(\la)\,,\tag{*}
\end{equation*}
where $g^-(\la)$ denotes the number of negative eigenvalues of the
Dirichlet-to-Neumann map at $\la\in\R$. The equality (*) was proved
in \cite{Fr1} for domains with sufficiently smooth boundaries. L.
Friedlander also noticed that (*) immediately implies Payne's
conjecture for the Laplacian on a bounded domain, according to which
the $\,(k+1)$th Neumann eigenvalue does not exceed the $\,k$th
Dirichlet eigenvalue. Later R. Mazzeo remarked that (*) remains
valid for domains with smooth boundaries in any Riemannian symmetric
space of noncompact type and gave a geometric explanation of
Friedlander's result \cite{M}.

For irregular boundaries, the Dirichlet-to-Neumann map may not be
well-defined and then (*) does not make sense. In 2004 N. Filonov
suggested another proof of Payne's conjecture for the Laplacian
\cite{Fi}. This proof does not use (*) and works for nonsmooth
boundaries. The author assumed that the resolvent of the Neumann
Laplacian on $\,\Om\,$ is compact but this condition can be removed
(see Remark \ref{filonov1}).

The aim of this note is to show that (*) holds for abstract
operators in a Hilbert space $H$, provided that the
Dirichlet-to-Neumann map is understood in a proper sense.
Traditionally, one assumes that the Dirichlet-to-Neumann map is a
family of operators acting in the same space and depending on the
spectral parameter $\,\la\,$ (see Subsection \ref{M3}). In our
understanding, it is a family of operators $\,\BC_\la\,$ generated
by the restrictions of the same sesquilinear form to different
subspaces $\,G_\la\subset H^1\,$. The identity (*) is proved with
the use of special isomorphisms between the subspaces $G_\la$ with
different values of $\la\,$.

This approach is close in spirit to Birman's paper \cite{B1} on
self-adjoint extensions of symmetric operators. In particular, it
removes technical problems related to nonsmooth boundaries and
allows one to extend Payne's conjecture to all operators generated
by differential quadratic forms with constant coefficients on an
arbitrary domain $\Om\subset\R^n$ with $n\ge2$ (see
Corollary~\ref{c-diff}). Another advantage of our scheme is that,
unlike the classical Dirichlet-to-Neumann map, the operators
$\,\BC_\la\,$ do not blow up as $\,\la\,$ passes through isolated
eigenvalues. This enables one to perform more detailed analysis of
the relation between their properties and spectral characteristics
of the Dirichlet and Neumann problems.

The paper is constructed as follows. In Section 1 we introduce some
necessary notation and state the main results. Note that the
notation is deliberately chosen as if $A$ is a second order elliptic
differential operator acting in the Sobolev spaces on a domain,
subject to the Dirichlet or Neumann boundary condition (even though
$H$ does not have to be a function space and the ellipticity is
irrelevant). In Section 2 we prove some simple auxiliary lemmas on
abstract self-adjoint operators. Section 3 is devoted to the proof
of main statements. Finally, Section 4 contains some remarks and
by-product results, which are not needed in our proofs but may be of
interest in themselves.

\begin{acknowledgements}
Preliminary results were reported in the workshop "Spectral analysis
of difference and differential operators" (Banach Center, Warsaw,
2005) organised by J. Janas and S. Naboko. I am grateful to the
organisers and participants for their encouragement and fruitful
discussions. I am also indebted to A. Pushnitski, M. Solomyak and,
especially, to N. Filonov for their useful comments.
\end{acknowledgements}

\section{Basic notation and main results}\label{M}

\subsection{Notation}\label{M1}
We shall always be assuming that $\,\la,\mu\in\R\,$ and
$\,z\in\C\,$.

Let $H$ be an infinite dimensional separable complex Hilbert space.
As usual, $(\cdot,\cdot)$ and $\|\cdot\|$ are the inner product and
norm in $H$, and $\,\dotplus\,$ denotes a direct sum in $\,H\,$. Let
\begin{enumerate}
\item[$\bullet$]$H^1$ be a dense subspace of $H$;
\item[$\bullet$] $\ab[\cdot]$ be a closed positive
    quadratic form on $H^1$ and $\ab[\cdot,\cdot]$ be the
    corresponding sesquilinear form;
\item[$\bullet$] $\,A_\NR\,$ be the self-adjoint operator
    in $H$ generated by the form $\ab[\cdot]$.
\end{enumerate}
We shall consider $H^1$ as a Hilbert space provided with the
inner product $\ab[\cdot,\cdot]$. Let
\begin{enumerate}
\item[$\bullet$] $H_0^1$ be a closed subspace of $H^1$ which
    is dense in $H$;
\item[$\bullet$] $\,A_\DR\,$ be the self-adjoint operator
    in $H$ generated by the restriction of $\ab[\cdot]$ to
    $H_0^1$.
\end{enumerate}

Further on we shall write $\,\BR\,$ instead of $\NR$ or $\DR$
in the case where the corresponding statement holds or
definition refers to the both operators $A_\NR$ and $A_\DR$. In
particular, we shall be using the following notation.
\begin{enumerate}
\item[$\bullet$] $\si(A_\BR)$ and $\si_\ess(A_\BR)$ denote
    the spectrum and the essential spectrum of $A_\BR$.
\item[$\bullet$] $\la_{B,\infty}:=\inf\si_\ess(A_\BR)$.
\item[$\bullet$]
    $\la_{B,1}\leq\la_{B,2}\leq\la_{B,3}\ldots$ are the
    eigenvalues of the operator $A_\BR$ lying in the
    interval $(-\infty,\la_{B,\infty})$ and counted with
    their multiplicities.
    \item[$\bullet$] $\chi_\La$ denotes the characteristic
        function of the Borel set $\La\subset\R$, so that
\item[$\bullet$] $\chi_\La(A_\BR)$ is the spectral
    projection of $A_\BR$ corresponding to $\La$.
\item[$\bullet$]
$E_\BR(z)$ be the orthogonal projection onto $\ker(A_\BR-zI)$ and
$E'_\BR(z):=I-E_\BR(z)$.
\item[$\bullet$]
    $\,N_\BR(\la):=\dim\chi_{(-\infty,\la)}(A_\BR)\,H\,$ is
    the left continuous counting function of the operator
    $A_\BR\,$.
\end{enumerate}
The Rayleigh--Ritz variational formula implies that $N_\DR(\la)\leq
N_\NR(\la)$ or, in other words, $0<\la_{\NR,j}\leq\la_{\DR,j}$ for
all $j=1,2,\ldots,\infty$. We have
$N_\BR(\la)=\#\{j:\la_{\BR,j}<\la\}$ whenever
$\la\leq\la_{\BR,\infty}$ and $N_\BR(\la)=\infty\,$ otherwise.

Let
\begin{enumerate}
\item[$\bullet$] $H_A^1$ be the set of vectors $u\in H^1$
    such that the functionals $v\to \ab[u,v]$ on $H_0^1$
    are $H$-continuous;
\item[$\bullet$] $A$ be the operator acting from $H_A^1$ to $H$ such
    that $(Au,v)=\ab[u,v]$ for all $v\in H_0^1$;
\item[$\bullet$] $G_z:=\{u\in H_A^1\,:\,Au=zu\}\,$ where
    $\,z\in\C$;
\item[$\bullet$] $\bb[u,v]:=\ab[u,v]-(Au,v)\,$ and
    $\,\bb[u]:=\bb[u,u]\,$ where $\,u\in H_A^1\,$ and $\,v\in H^1\,$.
\end{enumerate}
Since the operator $A$ is $H^1$-closed, $G_z$ are closed subspaces
of $H^1$. Denote
\begin{enumerate}
\item[$\bullet$]
$\BC_\la:=\left.(I-\la\Pi'_\la A_\NR^{-1})\right|_{G_\la}\,$ where
\item[$\bullet$]
$\Pi'_\la$ is the $H^1$-orthogonal projection onto $G_\la\,$ (an
explicit formula for $\Pi'_\la$ is given in Subsection~\ref{A3}).
\end{enumerate}
We shall consider $\,\BC_\la\,$ as an operator in $\,G_\la\,$.
Obviously,
\begin{equation}\label{b}
\ab[\BC_\la u,v]\ =\ \ab[u,v]-\la\,(u,v)\ =\ \bb[u,v]\,, \qquad
\forall u,v\in G_\la\,.
\end{equation}
Therefore $\BC_\la$ is a bounded self-adjoint operator in the
Hilbert space $G_\la$ provided with the inner product
$\ab[\cdot,\cdot]\,$.

Let
\begin{enumerate}
\item[$\bullet$] $\si(\BC_\la)\,$ and $\,\si_\ess(\BC_\la)\,$ be the spectrum and
essential spectrum of $\,\BC_\la\,$;
\item[$\bullet$]
$G_\la^0:=\ker\BC_\la\,$,
$\,G_\la^-:=\chi_{(-\infty,0)}(\BC_\la)G_\la\,$ and
$\,G_\la^+:=\chi_{(0,+\infty)}(\BC_\la)G_\la\,$,
\end{enumerate}
where $\,\chi_{(-\infty,0)}(\BC_\la)\,$ and
$\,\chi_{(0,+\infty)}(\BC_\la)\,$ are the corresponding spectral
projections of the operator $\,\BC_\la\,$.

Finally, let
\begin{enumerate}
\item[$\bullet$] $\HC_0\,$ be the subspace of $\,H\,$ spanned by all
    common eigenvectors of $\,A_\NR\,$ and $\,A_\DR\,$;
\item[$\bullet$] $\HC\,$  be the $H$-orthogonal complement
    of $\HC_0$;
\item[$\bullet$]
$\,n_{\NR,\DR}(\la):=\dim E_\BR(\la)\HC_0\,$ and $n_\BR(\la):=\dim
E_\BR(\la)\HC=\dim E_\BR(\la)H-n_{\NR,\DR}(\la)\,$.
\end{enumerate}
Clearly, $\,\HC\,$ and $\,\HC_0\,$ are invariant subspaces of the
operators $\,A_\NR\,$ and $\,A_\DR\,$, whose intersections with
$\,H^1\,$ are $\,H^1$-orthogonal. Similarly, $\,G_\la\bigcap \HC\,$
and $\,G_\la\bigcap\HC_0\,$ are invariant subspaces of
$\,\BC_\la\,$. We have
$\,G_\la\bigcap\HC_0=G_\la^0\bigcap\HC_0=E_\BR(\la)\HC_0\,$ and
$\,\left.\BC_\la\right|_{G_\la\bigcap\HC_0}=0\,$. In particular,
$\,G_\la\bigcap\HC_0=\{0\}\,$ whenever $\,\la\,$ is not an
eigenvalue corresponding to a common eigenvector of the operators
$\,A_\NR\,$ and $\,A_\DR\,$.

\subsection{Main results}\label{M2}
The following lemma implies that the restriction
$\,\left.\BC_\la\right|_\HC\,$ analytically depends on $\,\la\,$
outside the intersection of the essential spectra
$\,\si_\ess\left(A_\NR\right)\,$ and
$\,\si_\ess\left(A_\DR\right)\,$.

\begin{lemma}\label{l1-projections}
The $H^1$-orthogonal projection onto $\,G_\la\bigcap\HC\,$ is an
analytic operator-valued function of $\,\la\,$ on the set
$\,\R\setminus\left(\si_\ess\left(A_\NR\right)
\bigcap\si_\ess\left(A_\DR\right)\right)\,$.
\end{lemma}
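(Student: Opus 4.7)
The plan is to establish analyticity by deriving an explicit formula for $\Pi'_\la$ and exploiting the key structural fact that $A_\NR$ and $A_\DR$, restricted to $\HC$, have no common eigenvectors.

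I would begin by recasting the definition of $G_\la$. Using the identity $\ab[A_\NR^{-1}u,v]=(u,v)$ (valid for $u\in H$, $v\in H^1$), one verifies that $u\in G_\la$ if and only if $T_\la u\in G_0$, where $T_\la:=I-\la A_\NR^{-1}$ is regarded as a bounded operator on $H^1$ and $G_0:=(H_0^1)^{\perp_{H^1}}\subset H^1$. For real $\la$, the computation $\ab[T_\la u,v]=\ab[u,v]-\la(u,v)=\ab[u,T_\la v]$ shows that $T_\la$ is $H^1$-self-adjoint, from which one derives the dual description $G_\la^{\perp_{H^1}}=T_\la(H_0^1)$. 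Combining these two characterisations expresses $\Pi'_\la$ explicitly in terms of $T_\la$ and the $H^1$-projection onto $H_0^1$; the resulting formula is manifestly analytic on $\R\setminus\si(A_\NR)$. A completely symmetric derivation with $A_\NR$ and $A_\DR$ interchanged yields a second expression analytic on $\R\setminus\si(A_\DR)$.

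Next, I would reduce the problem to $\HC$: since $\HC$ and $\HC_0$ are invariant under both $A_\NR$ and $A_\DR$, the induced splitting of $H^1$ is $H^1$-orthogonal, and $G_\la$ respects this splitting, the $H^1$-projection onto $G_\la\cap\HC$ is simply the $\HC$-restriction of $\Pi'_\la$. Applied on $\HC$, the two formulas above already yield analyticity on the complement of $\si(A_\NR|_\HC)\cap\si(A_\DR|_\HC)$.

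The main technical obstacle is to extend analyticity through real $\la_0$ that are isolated eigenvalues of both $A_\NR|_\HC$ and $A_\DR|_\HC$ (such $\la_0$ necessarily lie outside $\si_\ess(A_\NR)\cap\si_\ess(A_\DR)$). At such a $\la_0$ I would use the Riesz spectral projections to split off the finite-dimensional subspace spanned by the two eigenspaces; on its $\HC$-orthogonal complement, either resolvent is analytic near $\la_0$ and the corresponding formula applies, while on the finite-dimensional subspace the problem reduces to a linear-algebra computation. The crucial input there is that in $\HC$ no Neumann eigenvector belongs to $H_0^1$ and no Dirichlet eigenvector is a Neumann eigenvector, so the Neumann and Dirichlet eigenspaces at $\la_0$ are in direct sum; the apparent singularities of the two formulas then cancel independently, glueing into a single analytic operator-valued function near $\la_0$.
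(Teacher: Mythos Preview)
Your first step---deriving an explicit formula for $\Pi'_\la$ from the identities $G_\la=T_\la^{-1}(G_0)$ and $G_\la^{\perp}=T_\la(H_0^1)$---is essentially the paper's construction: one obtains $\Pi_\la=T_\la\bigl(\Pi_0T_\la^2\big|_{H_0^1}\bigr)^{-1}\Pi_0T_\la$, which is the formula $T_\la(T_\la^\star T_\la)^{-1}T_\la^\star$ of Subsection~\ref{A3}. The gap is in the next step. The claim that ``a completely symmetric derivation with $A_\NR$ and $A_\DR$ interchanged'' produces a second formula is not justified: the r\^oles of the two operators are \emph{not} symmetric with respect to the inner product $\ab[\cdot,\cdot]$. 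While $A_\NR^{-1}$ is $H^1$-self-adjoint, one has $\ab[A_\DR^{-1}u,v]=(u,\Pi_0v)\ne(\Pi_0u,v)=\ab[u,A_\DR^{-1}v]$ in general, so $S_\la:=I-\la A_\DR^{-1}$ is not $H^1$-self-adjoint and your dual description $G_\la^{\perp}=S_\la(H_0^1)$ fails. In fact the $H^1$-adjoint of $S_\la$ restricted to $H_0^1$ is again $T_\la|_{H_0^1}$, so the ``symmetric'' route just reproduces the first formula.

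Your step at a common isolated eigenvalue $\la_0$ is also problematic. The subspace spanned by $E_\NR(\la_0)\HC$ and $E_\DR(\la_0)\HC$ is not invariant under either $A_\NR$ or $A_\DR$, so passing to its orthogonal complement does not yield operators whose resolvents are analytic near $\la_0$; and without a genuine second formula, the phrase ``the apparent singularities of the two formulas then cancel independently'' has no content.

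The paper bypasses both difficulties by working with the \emph{single} formula and proving directly that $T_\la^\star T_\la$ (restricted to $H_0^1\cap\HC$) is invertible whenever $\la\notin\si_\ess(A_\NR)\cap\si_\ess(A_\DR)$. The point you are missing is that the identity $\Pi_0A_\NR^{-1}=A_\DR^{-1}$ makes this one operator encode both boundary conditions: for $u\in H_0^1$ one has simultaneously $T_\la u=(I-\la A_\NR^{-1})u$ and $\Pi_0T_\la u=(I-\la A_\DR^{-1})u$. Hence if $0\in\si_\ess(T_\la^\star T_\la)$, a Weyl sequence $u_n$ with $\ab[T_\la u_n]\to0$ forces both $(I-\la A_\NR^{-1})u_n\to0$ and $(I-\la A_\DR^{-1})u_n\to0$ in $H^1$, which by \eqref{a3} is impossible unless $\la$ lies in both essential spectra. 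The restriction to $\HC$ guarantees $\ker T_\la|_{H_0^1\cap\HC}=\{0\}$ (its elements would be common eigenvectors), so $0$ is not an eigenvalue either. This argument covers the common isolated eigenvalues in one stroke, with no glueing required.
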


One can easily show that
\begin{equation}\label{eq-g0}
E_\NR(\la)H+E_\DR(\la)H\ \subset\ G_\la^0\,,\qquad\forall\la\in\R\,,
\end{equation}
(see Subsection \ref{Proofs3}). The next lemma is less obvious.

\begin{lemma}\label{l-g0}
If $\,\la\not\in\si_\ess(A_\NR)\bigcap\si_\ess(A_\DR)\,$ then we
have $\,G_\la^0=E_\DR(\la)H+E_\NR(\la)H\,$. If
$\,\la\not\in\si_\ess(A_\NR)\bigcup\si_\ess(A_\DR)\,$ then the point
$\,0\,$ does not belong to the essential spectrum of the operator
$\,\BC_\la\,$.
\end{lemma}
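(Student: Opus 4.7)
The inclusion $E_\NR(\la)H + E_\DR(\la)H \subseteq G_\la^0$ is the content of (\ref{eq-g0}) and is checked directly: any $u \in E_\NR(\la)H$ satisfies $\ab[u,v] = \la(u,v)$ for every $v \in H^1$, while for $u \in E_\DR(\la)H \subseteq H_0^1$ and $v \in G_\la$ one has $\ab[u,v] = \overline{\ab[v,u]} = \overline{(Av,u)} = \la(u,v)$ by sesquilinearity and $Av = \la v$. The content of Part 1 is the reverse inclusion; by the symmetry of the statement I may assume $\la \notin \si_\ess(A_\NR)$, so that $E_\NR(\la)$ has finite rank and $A_\NR - \la I$ has closed range $(E_\NR(\la)H)^{\perp_H}$.

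For $u \in G_\la^0$, by subtracting $E_\NR(\la)u \in E_\NR(\la)H \subseteq G_\la^0$ I may assume $u \perp_H E_\NR(\la)H$; it then suffices to prove $u \in E_\DR(\la)H$, i.e.\ $u \in H_0^1$ together with $(A_\DR - \la I)u = 0$. To get $u \in H_0^1$ I would use the $H^1$-orthogonal decomposition $H^1 = H_0^1 \dotplus G_0$ (the $H^1$-orthogonal complement of $H_0^1$ is exactly $G_0$, since $\ab[w, \cdot] = 0$ on $H_0^1$ is equivalent to $w \in H_A^1$ with $Aw = 0$). Writing $u = u' + u''$ with $u' \in H_0^1$ and $u'' \in G_0$, the goal becomes $u'' = 0$; applying $A$ produces $(A_\DR - \la I)u' = \la u''$, while testing $\bb[u, v] = 0$ against $v \in G_\la$ and using the identity $\ab[u', v] = \la(u', v)$ (which holds because $u' \in H_0^1$ and $Av = \la v$) extracts $\ab[u'', v] = \la(u'', v)$ for all $v \in G_\la$, together with $\ab[u'', w] = 0$ for all $w \in H_0^1$.

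The crux is to deduce $u'' = 0$ from these two relations and $u \perp_H E_\NR(\la)H$. The plan is to use the Fredholm inverse of $A_\NR - \la I$ on $(E_\NR(\la)H)^{\perp_H}$ to extend the functional $v \mapsto \ab[u'', v]$, which is $H$-continuous on $H_0^1$ and on $G_\la$ separately, into an $H$-continuous functional on the whole of $H^1$. This would place $u''$ in $\mathrm{dom}(A_\NR)$; combining $A_\NR u'' = \la u''$ (inherited from the $G_\la$-identity) with $A_\NR u'' \perp_H H_0^1$ (from the $H_0^1$-identity, which by density of $H_0^1$ in $H$ gives $A_\NR u'' = 0$) then forces $u'' = 0$ for $\la \ne 0$; for $\la = 0$ the $G_\la = G_0$ identity applied to $v = u''$ reads $\ab[u'', u''] = 0$ directly. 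I expect the main technical obstacle to be this patching step, which requires quantitative control on how the closed subspaces $H_0^1$ and $G_\la$ sit inside $H^1$, and the orthogonality $u \perp_H E_\NR(\la)H$ is exactly what allows a Fredholm inverse to be inserted to bridge any finite-codimensional gap in $H_0^1 + G_\la$.

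Part 2 is then a corollary. Under the stronger hypothesis both $E_\NR(\la)H$ and $E_\DR(\la)H$ are finite-dimensional, so Part 1 gives $\dim \ker \BC_\la < \infty$. Isolation of $0$ in $\si(\BC_\la)$ follows from Lemma \ref{l1-projections}: the $H^1$-projection onto $G_\mu \cap \HC$ is analytic in $\mu$ near $\la$, so $\BC_\mu|_{G_\mu \cap \HC}$ can be transported to a self-adjoint analytic family on a fixed Hilbert space. For $\mu \ne \la$ in a sufficiently small punctured neighbourhood, $\mu$ lies in the resolvent set of both $A_\NR$ and $A_\DR$, whence Part 1 yields $\ker \BC_\mu \cap \HC = \{0\}$; analytic Fredholm theory then implies that $0$ is isolated in $\si(\BC_\la|_{G_\la \cap \HC})$, and since $\BC_\la$ vanishes on the finite-dimensional $G_\la \cap \HC_0$, we conclude $0 \notin \si_\ess(\BC_\la)$.
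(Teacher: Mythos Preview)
Your proposal has genuine gaps in both parts.

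\textbf{Part 1.} The appeal to ``symmetry of the statement'' is not valid. Although the \emph{conclusion} $G_\la^0=E_\DR(\la)H+E_\NR(\la)H$ is symmetric under swapping the labels $\NR,\DR$, the objects $G_\la$, $\BC_\la$, $G_\la^0$ are built from the asymmetric pair $H_0^1\subset H^1$; there is no involution exchanging $A_\NR$ and $A_\DR$ that preserves these constructions. So the case $\la\notin\si_\ess(A_\DR)$ is not reducible to the case $\la\notin\si_\ess(A_\NR)$. The paper handles both cases at once via Lemma~\ref{l1-projections}: the hypothesis $\la\notin\si_\ess(A_\NR)\cap\si_\ess(A_\DR)$ guarantees that $T_\la^\star T_\la$ is invertible, hence $\Pi(\la)$ exists and has range $(I-\la A_\NR^{-1})H_0^1$; then $\BC_\la u=0$ says precisely that $(I-\la A_\NR^{-1})u\perp_{H^1}G_\la$, forcing $(I-\la A_\NR^{-1})u=(I-\la A_\NR^{-1})w$ for some $w\in H_0^1$, whence $u\in E_\NR(\la)H+H_0^1$.

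Even within the single case you treat, the key step is missing. You correctly reduce to showing $u''=0$ where $u''=\Pi_0'u\in G_0$, and correctly observe that $G_0\cap\DC(A_\NR)=\{0\}$, so it would suffice to place $u''$ in $\DC(A_\NR)$. But your ``patching'' plan does not do this: the functional $v\mapsto\ab[u'',v]$ is known on $H_0^1$ and on $G_\la$, yet $H_0^1+G_\la$ need not equal $H^1$, and when it has positive codimension there is no canonical $H$-continuous extension. You describe this as ``the main technical obstacle'' without resolving it; in the paper the obstacle is bypassed entirely by working with $(I-\la A_\NR^{-1})u$ and the explicit projection $\Pi(\la)$ rather than with $\Pi_0'u$.

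\textbf{Part 2.} Your deduction from Part~1 via analyticity does not work. Knowing that an analytic family $\mu\mapsto\BC_\mu$ of bounded self-adjoint operators satisfies $\ker\BC_\mu=\{0\}$ for $\mu\neq\la$ and $\dim\ker\BC_\la<\infty$ does \emph{not} imply that $0$ is isolated in $\si(\BC_\la)$: take for instance the family $A(\mu)=\mu^2 I+D$ on $\ell^2$, where $D$ is multiplication by $(1/n)_{n\ge1}$; then $\ker A(\mu)=\{0\}$ for every $\mu$, yet $0\in\si_\ess(A(0))$. Analytic Fredholm theory would apply if $\BC_\mu-I$ were compact, but nothing in the hypotheses forces $A_\NR^{-1}$ to be compact. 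The paper instead runs a direct Weyl-sequence argument: from a singular sequence $u_n$ for $\BC_\la$ at $0$ it extracts (using the explicit form of $\Pi_\la$ via~\eqref{a5}) that both $\ab[(I-\la A_\NR^{-1})\tilde u_n]\to0$ and $\ab[(I-\la A_\DR^{-1})u_n]\to0$ along suitable vectors orthogonal to the finite-dimensional eigenspaces, contradicting~\eqref{a3} under the hypothesis $\la\notin\si_\ess(A_\NR)\cup\si_\ess(A_\DR)$.
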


Lemmas~\ref{l1-projections} and \ref{l-g0} imply

\begin{theorem}\label{t1-main}
Let $\,\la\not\not\in\si_\ess(A_\NR)\bigcup\si_\ess(A_\DR)\,$. Then
for each sufficiently small $\,\eps>0\,$ there exists $\,\de>0\,$
such that the intersection $\,(-\eps,\eps)\bigcap\si(\BC_\mu)\,$
consists of
\begin{enumerate}
\item[(1)]
$\,n_\NR(\mu)+n_\DR(\mu)+n_{\NR,\DR}(\mu)\,$ zero eigenvalues if
$\,\mu=\la\,$,
\item[(2)]
$\,n_\DR(\la)\,$ negative  and $\,n_\NR(\la)\,$ positive eigenvalues
if $\,\mu\in(\la-\de,\la)\,$,
\item[(3)]
$\,n_\NR(\la)\,$ negative  and $\,n_\DR(\la)\,$ positive eigenvalues
if $\,\mu\in(\la,\la+\de)\,$
\end{enumerate}
(as usual, the eigenvalues are counted according to their
multiplicities).
\end{theorem}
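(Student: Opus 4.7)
My plan is to decompose $H=\HC_0\oplus\HC$, dispose of the $\HC_0$-part immediately, and reduce the $\HC$-part to a first-order perturbation computation by combining Lemmas~\ref{l1-projections} and~\ref{l-g0}. First I would note that both $\HC_0$ and $\HC$ are invariant under $A_\NR$, $A_\DR$, $A$ and $\BC_\mu$, and that $\BC_\mu$ vanishes on $G_\mu\bigcap\HC_0=E_\BR(\mu)\HC_0$. Since $\la\notin\si_\ess(A_\NR)\bigcup\si_\ess(A_\DR)$ is isolated in both point spectra, this subspace has dimension $n_{\NR,\DR}(\la)$ at $\mu=\la$ and is trivial in a punctured neighborhood, which accounts for the $n_{\NR,\DR}(\la)$ zero eigenvalues claimed in~(1).

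On $\HC$ I would apply Lemma~\ref{l1-projections} together with the standard Kato construction to obtain an analytic family of $\ab$-unitary isomorphisms $U_\mu\colon G_\la\bigcap\HC\to G_\mu\bigcap\HC$ with $U_\la=I$, and set $\widetilde\BC_\mu:=U_\mu^{-1}\BC_\mu U_\mu$. By Lemma~\ref{l-g0}, $0$ is isolated in $\si(\widetilde\BC_\la)$ with kernel $E_\NR(\la)\HC\dotplus E_\DR(\la)\HC$, the sum being $H$-direct because any common element would belong to $\HC_0$; its dimension is $N:=n_\NR(\la)+n_\DR(\la)$. Analytic perturbation theory then produces $N$ real-analytic branches $\beta_j(\mu)$ with $\beta_j(\la)=0$, finishing~(1) and reducing~(2),(3) to determining the signs of $\beta_j'(\la)$.

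The signs are read off from the first-order perturbation matrix on $\ker\widetilde\BC_\la$. Since $U_\mu$ is $\ab$-isometric, \eqref{b} rewrites as $\ab[\widetilde\BC_\mu u,v]=\ab[u,v]-\mu(U_\mu u,U_\mu v)$, and differentiating at $\mu=\la$ with $\dot u:=U_\la'u$ yields
\[
\ab[\widetilde\BC_\la'u,v]=-(u,v)-\la\bigl((\dot u,v)+(u,\dot v)\bigr),
\]
where $Au_\mu=\mu u_\mu$ forces $(A-\la)\dot u=u$ and $U_\mu^*U_\mu=I$ gives the $\ab$-anti-symmetry $\ab[\dot u,v]+\ab[u,\dot v]=0$. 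On the Neumann block ($u,v\in E_\NR(\la)\HC$), membership $u\in\mathrm{dom}\,A_\NR$ yields $\ab[w,u]=\la(w,u)$ for every $w\in H^1$; combined with the anti-symmetry, this collapses the matrix element to $-(u,v)$. On the Dirichlet block ($u,v\in E_\DR(\la)\HC\subset H^1_0$), I would decompose $\dot u=\dot u_0+\dot u_\perp$ with $\dot u_0$ the $\ab$-projection of $\dot u$ onto $H^1_0$; testing $(A-\la)\dot u=u$ against $v\in H^1_0$ and using $v\in\mathrm{dom}\,A_\DR$ gives $\ab[\dot u,v]=\la(\dot u_0,v)$ and hence $(\dot u_\perp,v)=-(u,v)/\la$, after which the matrix element reduces to $+(u,v)$. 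The same pair of identities, together with anti-symmetry, makes the Neumann--Dirichlet cross block vanish.

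The perturbation matrix is therefore block-diagonal of signature $(n_\NR(\la),n_\DR(\la))$, so $n_\NR(\la)$ branches satisfy $\beta_j'(\la)<0$ and $n_\DR(\la)$ satisfy $\beta_j'(\la)>0$, giving parts~(2) and~(3). The only delicate step I foresee is tracking the $H^1_0$-decomposition of $\dot u$ in the Dirichlet block; once $(A-\la)\dot u=u$ is paired against test functions $v\in H^1_0$ the identity $(\dot u_\perp,v)=-(u,v)/\la$ is forced, so this is computational rather than a genuine obstacle.
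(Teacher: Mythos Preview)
Your approach is correct and genuinely different from the paper's. Both proofs rest on Lemmas~\ref{l1-projections} and~\ref{l-g0}, but they diverge in how the eigenvalue signs near $0$ are extracted. The paper does not transfer $\BC_\mu$ to a fixed space via Kato--Nagy unitaries; instead it builds explicit test vectors in $G_\mu$ by applying the projections $P'_\NR(\mu)$ and $P'_\DR(\mu)$ (Subsection~\ref{A4}) to $E'_\NR(\la)\LC_\la$ and $E'_\DR(\la)\LC_\la$ respectively, evaluates $\bb[P'_\BR(\mu)u]$ directly from the identities \eqref{a6}--\eqref{a7} (yielding \eqref{p7}--\eqref{p8}, whose leading terms $(\mu-\la)\|u\|^2$ and $(\la-\mu)\|u\|^2$ encode exactly the same signature you find), and then controls the error coming from the spectral projection $I-\chi_{(-\eps,\eps)}(\BC_\mu)$ by an $O((\la-\mu)^2)$ estimate \eqref{p14}. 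Your route replaces this hands-on construction by standard Kato--Rellich machinery: once $\widetilde\BC_\mu$ lives on a fixed space, the derivatives $\beta_j'(\la)$ are eigenvalues of the compressed operator $P_K\widetilde\BC_\la'|_K$, and your first-order computation---which is correct, including the cross-block vanishing and the identity $(\dot u_\perp,v)=-(u,v)/\la$ in the Dirichlet block---reproduces the block-diagonal form with signature $(n_\NR(\la),n_\DR(\la))$. The paper's argument is more self-contained (no black-box appeal to Rellich or the transformation function), while yours is cleaner and makes the mechanism---eigenvalue branches crossing $0$ transversally, with slopes dictated by the $H$-metric restricted to $E_\NR(\la)\HC$ and $E_\DR(\la)\HC$---fully transparent. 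One small point worth making explicit when you write this up: differentiating $Au_\mu=\mu u_\mu$ is only formal since $A$ is unbounded on $H^1$; the rigorous statement is obtained by differentiating the weak form $\ab[u_\mu,w]=\mu(u_\mu,w)$ for $w\in H_0^1$, which also shows $\dot u\in H_A^1$.
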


\begin{remark}\label{r-main}
By Lemma~\ref{l-g0}, if
$\,\la\not\in\si_\ess(A_\NR)\bigcup\si_\ess(A_\DR)\,$ then
$\,[-\eps,\eps]\bigcap\si_\ess\left(\BC_\mu\right)=\varnothing\,$
for all sufficiently small $\,\eps,\de>0\,$ and all
$\,\mu\in[\la-\de,\la+\de]\,$. By Lemma~\ref{l1-projections}, the
eigenvalues $\,\nu_j(\mu)\,$ of the restrictions
$\,\left.\BC_\mu\right|_{G_\mu\bigcap\HC}\,$ lying in
$\,(-\eps,\eps)\,$ are continuous function of
$\,\mu\in(\la-\de,\la+\de)\,$. Therefore, if $\,\eps\,$ and
$\,\de\,$ are small enough then $\,\nu_j(\mu)\in(-\eps,\eps)\,$ for
some $\,\mu\in(\la-\de,\la+\de)\,$ if and only if
$\,\nu_j(\la)=0\,$. Theorem~\ref{t1-main} states that
$\,n_\DR(\mu)\,$ eigenvalues $\,\nu_j(\mu)\,$ change their sign from
minus to plus and $\,n_\NR(\mu)\,$ eigenvalues $\,\nu_j(\mu)\,$
change their sign from plus to minus as $\,\mu\,$ passes through the
eigenvalue $\,\la\,$. At the point $\,\la\,$ all these eigenvalues
are equal to zero and, in addition, there are $\,n_{\NR,\DR}(\la)\,$
zero eigenvalues of the restriction
$\,\left.\BC_\la\right|_{G_\la\bigcap\HC_0}\,$.
\end{remark}

\begin{remark}\label{friedlender0}
A similar result was obtained in \cite{Fr1} and \cite{M} for
differential operators on domains with smooth boundaries under
the additional assumption that their spectra are discrete.
Theorem~\ref{t1-main} holds in the abstract setting and remains
valid for $\,\la\,$ lying in the gaps of the essential spectra.
\end{remark}

\begin{corollary}\label{c-main}
Let $\,a<b\,$. If $\,[a,b]\bigcap\si_\ess(A_\NR)=\varnothing\,$ and
$\,[a,b]\bigcap\si_\ess(A_\DR)=\varnothing\,$ then
\begin{equation}\label{eq-main}
\dim G_b^-\ =\ \dim G_a^-\,+\,\dim\chi_{[a,b)}(A_\NR)\HC
\,-\,\dim\chi_{(a,b]}(A_\DR)\HC\,.
\end{equation}
\end{corollary}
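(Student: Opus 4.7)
The plan is to regard $f(\mu):=\dim G_\mu^-$ as a function on $[a,b]$, show that it is piecewise constant with jumps only at the finitely many eigenvalues of $A_\NR$ and $A_\DR$ lying in $[a,b]$, compute each jump using Theorem~\ref{t1-main}, and identify the telescoping sum with the right-hand side of~(\ref{eq-main}). Set $\La:=\{\la\in[a,b]:\la\in\si(A_\NR)\cup\si(A_\DR)\}$; since $[a,b]$ misses both essential spectra, $\La$ is finite. By Lemma~\ref{l-g0}, for every $\mu\in[a,b]$ we have $0\notin\si_\ess(\BC_\mu)$ and $\ker\BC_\mu=E_\NR(\mu)H+E_\DR(\mu)H$, so $0\in\si(\BC_\mu)$ precisely when $\mu\in\La$. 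Because $\BC_\mu$ vanishes on $G_\mu\cap\HC_0$, the negative spectral subspace $G_\mu^-$ lies in $G_\mu\cap\HC$, and the whole question reduces to tracking $\BC_\mu|_{G_\mu\cap\HC}$.

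On each connected component of $[a,b]\setminus\La$ the projection onto $G_\mu\cap\HC$ depends analytically on $\mu$ by Lemma~\ref{l1-projections}. Transporting $\BC_\mu|_{G_\mu\cap\HC}$ through this analytic family produces a continuous self-adjoint family whose spectrum does not meet $0$; hence eigenvalues depend continuously on $\mu$, none can cross $0$, and $f$ is constant on each component. At a point $\la\in\La$, Theorem~\ref{t1-main} describes the full intersection of $\si(\BC_\mu|_{G_\mu\cap\HC})$ with a neighborhood $(-\eps,\eps)$ of zero: for $\mu$ just below $\la$ there are $n_\DR(\la)$ negative and $n_\NR(\la)$ positive eigenvalues, at $\mu=\la$ they all coalesce at zero, and for $\mu$ just above $\la$ there are $n_\NR(\la)$ negative and $n_\DR(\la)$ positive eigenvalues. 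The remaining part of $\si(\BC_\mu|_{G_\mu\cap\HC})$ stays outside $(-\eps,\eps)$ by the analytic tracking, so the one-sided jumps of $f$ at $\la$ are
\begin{equation*}
f(\la)-f(\la-0)=-n_\DR(\la),\qquad f(\la+0)-f(\la)=n_\NR(\la).
\end{equation*}

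Telescoping $f(b)-f(a)$ through the finitely many jumps — the right-jump at $a$ contributes if $a\in\La$, the left-jump at $b$ contributes if $b\in\La$, and both contribute at interior $\la\in\La$ — yields
\begin{equation*}
f(b)-f(a)\;=\;\sum_{\la\in[a,b)}n_\NR(\la)\;-\;\sum_{\la\in(a,b]}n_\DR(\la).
\end{equation*}
Since $[a,b]$ misses the essential spectra, $\chi_{[a,b)}(A_\NR)\HC$ decomposes as the direct sum of $E_\NR(\la)\HC$ over $\la\in[a,b)$; hence the first sum equals $\dim\chi_{[a,b)}(A_\NR)\HC$, and likewise the second equals $\dim\chi_{(a,b]}(A_\DR)\HC$, giving~(\ref{eq-main}). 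The main subtlety in executing the plan is confirming that no eigenvalues of $\BC_\mu|_{G_\mu\cap\HC}$ beyond those accounted for by Theorem~\ref{t1-main} enter the window $(-\eps,\eps)$ as $\mu$ varies near $\la$; this is precisely what Theorem~\ref{t1-main} together with Remark~\ref{r-main} asserts, so the remaining work is purely combinatorial bookkeeping of signs and endpoints.
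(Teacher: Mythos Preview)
Your proposal is correct and follows essentially the same route as the paper's proof: both argue that $\dim G_\mu^-$ is locally constant on $[a,b]\setminus\La$ (using Lemma~\ref{l1-projections}), compute the one-sided jumps at each eigenvalue $\la$ from Theorem~\ref{t1-main} as $f(\la)-f(\la-0)=-n_\DR(\la)$ and $f(\la+0)-f(\la)=n_\NR(\la)$, and then telescope. The paper's version is terser and does not spell out the endpoint bookkeeping, but the argument is the same.
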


If $\,a<\inf \si(A_\NR)\,$ then $\,G_a^-=\{0\}\,$ and, according to
the next theorem, the equality \eqref{eq-main} remains valid for
$\,b\in[\la_{\NR,\infty},\la_{\DR,\infty})\,$.

\begin{theorem}\label{t2-main}
$\;N_\NR(\la)=N_\DR(\la)+n_\DR(\la)+\dim G_\la^-\;$ for all
$\,\la<\la_{\DR,\infty}\,$.
\end{theorem}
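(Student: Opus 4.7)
My plan is to deduce Theorem~\ref{t2-main} from Corollary~\ref{c-main} in the regime $\la<\la_{\NR,\infty}$ and to argue separately by a variational inertia count above it. Fix a base point $a_0<\inf\si(A_\NR)$; then for $u\in G_{a_0}\setminus\{0\}$ one has $\bb[u]=\ab[u]-a_0\|u\|^2\geq(\inf\si(A_\NR)-a_0)\|u\|^2>0$, so $G_{a_0}^-=\{0\}$. The inclusion $H_0^1\subset H^1$ combined with the min--max principle for the essential spectrum yields $\la_{\NR,\infty}\leq\la_{\DR,\infty}$; hence when $\la<\la_{\NR,\infty}$ the interval $[a_0,\la]$ is disjoint from both $\si_\ess(A_\NR)$ and $\si_\ess(A_\DR)$, and Corollary~\ref{c-main} with $a=a_0$, $b=\la$ gives
$$
\dim G_\la^-\;=\;\dim\chi_{(-\infty,\la)}(A_\NR)\HC\;-\;\dim\chi_{(-\infty,\la]}(A_\DR)\HC.
$$

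The two $\HC$-restricted dimensions on the right differ from $N_\NR(\la)$ and from $N_\DR(\la)+n_\DR(\la)$ only through contributions from $\HC_0$. The key observation is that any common eigenvector $e\in\HC_0$ must correspond to the same eigenvalue for $A_\NR$ and for $A_\DR$: since $e\in H_0^1\subset H^1$, the identities $\ab[e,v]=\mu_\NR(e,v)$ $\forall v\in H^1$ and $\ab[e,v]=\mu_\DR(e,v)$ $\forall v\in H_0^1$ coexist, and setting $v=e$ forces $\mu_\NR=\mu_\DR$. Therefore the $\HC_0$-restricted counting functions of $A_\NR$ and $A_\DR$ coincide, their difference cancels when passing from the $\HC$-restricted counts to the full ones, and the desired identity follows for all $\la<\la_{\NR,\infty}$.

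For $\la\in[\la_{\NR,\infty},\la_{\DR,\infty})$ one has $N_\NR(\la)=\infty$ whereas $N_\DR(\la)+n_\DR(\la)<\infty$, so the claim reduces to $\dim G_\la^-=\infty$. I would obtain this by a direct variational count: for $\la\notin\si(A_\DR)$ the sum $H^1=H_0^1+G_\la$ is $q$-orthogonal with respect to $q:=\ab-\la(\cdot,\cdot)$ (using $\ab[u_G,u_0]=\la(u_G,u_0)$ for $u_G\in G_\la,u_0\in H_0^1$), and the corresponding inertia identity gives $N_\NR(\la)=N_\DR(\la)+\dim G_\la^-$, forcing $\dim G_\la^-=\infty$. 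The finitely many $\la\in\si(A_\DR)\bigcap[\la_{\NR,\infty},\la_{\DR,\infty})$ are handled by approaching from below by $\mu\notin\si(A_\DR)$ and absorbing the jump $n_\DR(\la)$ via Theorem~\ref{t1-main}(2).

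The delicate point is the last step: Corollary~\ref{c-main} is blocked at $\la_{\NR,\infty}$, and extending the identity into the Neumann essential spectrum demands either an infinite-dimensional Sylvester-type inertia argument on the $q$-orthogonal decomposition $H^1=H_0^1+G_\la$, or a careful continuity argument propagating the formula through this region with the help of Lemma~\ref{l1-projections} and Theorem~\ref{t1-main}. The reconciliation with $\HC_0$, by contrast, is routine once one notices that common eigenvectors automatically produce a single eigenvalue for both operators.
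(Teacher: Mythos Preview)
Your argument for $\la<\la_{\NR,\infty}$ via Corollary~\ref{c-main} is correct, and the cancellation of the $\HC_0$ contributions is handled properly. Note, however, that this route is logically heavier than the paper's: Corollary~\ref{c-main} rests on Theorem~\ref{t1-main}, the most delicate result in the paper, whereas the paper proves Theorem~\ref{t2-main} by a short direct argument that is independent of Theorem~\ref{t1-main}.

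Your treatment of the range $[\la_{\NR,\infty},\la_{\DR,\infty})$ has two concrete problems beyond the acknowledged difficulty of the Sylvester step. First, the claim $N_\NR(\la)=\infty$ can fail at $\la=\la_{\NR,\infty}$: if only finitely many Neumann eigenvalues lie below $\la_{\NR,\infty}$ then $N_\NR(\la_{\NR,\infty})<\infty$, and the identity does not reduce to $\dim G_\la^-=\infty$. Second, your proposal to absorb the jump at isolated Dirichlet eigenvalues via Theorem~\ref{t1-main}(2) is illegitimate in this range, since Theorem~\ref{t1-main} requires $\la\notin\si_\ess(A_\NR)\bigcup\si_\ess(A_\DR)$, which is precisely what fails once $\la\geq\la_{\NR,\infty}$.

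The paper bypasses the case split entirely. One inequality is \eqref{upper}. For the reverse, it takes $\LC\subset\chi_{(-\infty,\la)}(A_\NR)H$ to be the set of $v$ with $\chi_{(-\infty,\la]}(A_\DR)(A-\la I)v=0$; since $\la<\la_{\DR,\infty}$ this is a finite-codimension condition, so $\dim\LC\geq N_\NR(\la)-N_\DR(\la)-n_\DR(\la)$. Identity~\eqref{a7} then gives $\bb[P'_\DR(\la)v]<0$ for every nonzero $v\in\LC$, and $P'_\DR(\la)$ is injective on $\LC$ because $\LC\cap\DC(A_\DR)=\{0\}$. Hence $\dim G_\la^-\geq N_\NR(\la)-N_\DR(\la)-n_\DR(\la)$. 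This is exactly the inertia bound your $q$-orthogonal decomposition is reaching for, but executed with the explicit projection $P'_\DR(\la)$ (which incorporates $E'_\DR(\la)$ and hence tolerates $\la\in\si(A_\DR)$) and valid uniformly for all $\la<\la_{\DR,\infty}$ without any reference to $\si_\ess(A_\NR)$.
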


\begin{remark}\label{birman1}
By Theorem~\ref{t2-main}, $\,N_\NR(\la)=\dim G_\la^-\;$ for all
$\la$ lying below $\,\si(A_\DR)\,$. In the case where $\,A_\NR\,$
and $\,A_\DR\,$ are self-adjoint extensions of the same symmetric
operator defined on $\DC(A_\NR)\bigcap\DC(A_\DR)$, the above
identity was obtained by M.S. Birman \cite{B1} (see also \cite{B2}).
Theorem~\ref{t2-main} extends Birman's result to all
$\la<\la_{\DR,\infty}$ in a slightly more general setting (see
Subsection~\ref{R1}).
\end{remark}

\begin{remark}\label{filonov1}
N. Filonov noticed in \cite{Fi} that, for the Laplacian on an
arbitrary domain $\,\Omega\subset\R^n\,$,
\begin{equation}\label{filonov}
\ab[u]\ \leq\ \la\,\|u\|^2\,,\qquad \forall u\in
\chi_{[0,\la]}(A_\DR)H+E_\NR(\la)H+G^0_\la+G_\la^-\,.
\end{equation}
Similar arguments show that \eqref{filonov} holds for any pair of
abstract operators $A_\DR$ and $A_\NR$ (see
Subsection~\ref{Proofs2}). The estimate \eqref{filonov} immediately
implies that
\begin{equation}\label{upper}
N_\NR(\la)\ \geq\ N_\DR(\la)\,+\,n_\DR(\la)\,+\,\dim
G_\la^-\,,\qquad\forall\la\in\R\,.
\end{equation}
The inequality \eqref{upper} is sufficient to prove Payne's
conjecture for the Laplacian on a bounded domain (see the proof of
Corollary~\ref{c-diff}).
\end{remark}

\begin{remark}\label{filonov2}
The equality $\,N_\NR(\la)=N_\DR(\la)+n_\DR(\la)+\dim G_\la^-\,$
remains valid for all $\,\la>\la_{\DR,\infty}\,$ because
$\,N_\NR(\la)=N_\DR(\la)=\infty$. However, as was pointed out by N.
Filonov, it may not be true for $\,\la=\la_{\DR,\infty}\,$.
\end{remark}

\begin{remark}\label{r-g0}
Let $\,\la=\la_{\DR,k}<\la_{\NR,\infty}\,$.
Theorem~\ref{t2-main} implies that the number of eigenvalues
$\,\la_{\NR,j}\,$ lying below $\,\la_{\DR,k}\,$ is equal to
$\,k-1+n_\DR(\la_{\DR,k})+\dim G_{\la_{\DR,k}}^-\,$. Therefore
\begin{enumerate}
\item[(1)] $\,\la_{\NR,k+q_k+p_k-1}<\la_{\DR,k}\,$, where
    $\,p_k:=\dim G_{\la_{\DR,k}}^-\,$ and
    $\,q_k:=n_\DR(\la_{\DR,k})\,$.
\end{enumerate}
If $\,n_\DR(\la_{\DR,k})=0\,$ then
$\,\la_{\NR,k+q_k+p_k}=\la_{\NR,k+p_k}=\la_{\DR,k}\,$; if
$\,n_\DR(\la_{\DR,k})\ne0\,$ then $\,q_k\geq1\,$. Thus we
always have
\begin{enumerate}
\item[(2)] $\,\la_{\NR,k+p_k}\leq\la_{\DR,k}\,$.
\end{enumerate}
Note that the estimates (1) and (2) are actually consequences
of \eqref{upper}. These estimates and Lemma~\ref{l-g0} imply
that
\begin{enumerate}
\item[(3)] $\,\la_{\NR,k+1}\leq\la_{\DR,k}\,$ whenever
    there exists a vector $\,u\in G_{\la_{\DR,k}}\,$, such
    that $\bb[u]\leq0$ and $u\not\in\DC(A_\DR)\,$;
\item[(4)] $\,\la_{\NR,k+1}<\la_{\DR,k}\,$ whenever
    $\,n_\DR(\la_{\DR,k})\geq1\,$ and there exist two
    vectors $\,u_1,u_2\in G_{\la_{\DR,k}}\,$, such that
    $\bb[u_1]\leq0$, $\bb[u_2]\leq0$ and the linear
    subspace spanned by $\,u_1$ and $u_2\,$ does not
    contain Neumann eigenvectors.
\end{enumerate}
Indeed, if $G_{\la_{\DR,k}}^-\geq1$ then (3) and (4) follow from (2)
and (1) respectively. If $G_{\la_{\DR,k}}^-=\{0\}$ then $\,u\in
G_{\la_{\DR,k}}^0\,$ and $\,u_1,u_2\in G_{\la_{\DR,k}}^0\,$. The
inclusion $\,u\in G_{\la_{\DR,k}}^0\,$ implies that
$\,\la_{\DR,k}\,$ is also a Neumann eigenvalue and, consequently,
$\,\la_{\NR,k+1}=\la_{\DR,k}\,$. The inclusions $\,u_1,u_2\in
G_{\la_{\DR,k}}^0\,$ imply that $\,n_\DR(\la_{\DR,k})\geq2\,$
(otherwise a linear combination of $u_1$ and $u_2$ would belong to
$\,E_\NR(\la_{\DR,k})H\,$).
\end{remark}

Lemma~\ref{l-g0} and Theorem~\ref{t2-main} also imply

\begin{corollary}\label{c-resolvents}
If $\,\la<\la_{\DR,\infty}\,$ and
$\,\la\not\in\si(A_\NR)\bigcup\si(A_\DR)\,$ then the number of
negative eigenvalues of the self-adjoint operator
$\,R'(\la):=(A_\NR-\la I)^{-1}-(A_\DR-\la I)^{-1}\,$ in $\,H\,$
coincides with $\,N_\NR(\la)-N_\DR(\la)\,$.
\end{corollary}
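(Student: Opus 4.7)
The plan is to identify the negative spectral subspace of $R'(\la)$ with $G_\la^-$ via a direct factorisation, and then invoke Theorem~\ref{t2-main}.

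The hypotheses $\la\notin\sigma(A_\NR)\cup\sigma(A_\DR)$ give $E_\NR(\la)=E_\DR(\la)=0$ and hence $n_\DR(\la)=0$; by Lemma~\ref{l-g0} they also force $G_\la^0=\{0\}$ and $0\notin\sigma_\ess(\BC_\la)$, so $\BC_\la$ is a bounded self-adjoint invertible operator on $(G_\la,\ab[\cdot,\cdot])$. Combined with $\la<\la_{\DR,\infty}$, Theorem~\ref{t2-main} then yields $N_\NR(\la)-N_\DR(\la)=\dim G_\la^-$, so it suffices to show that the negative spectral subspace of $R'(\la)$ has dimension $\dim G_\la^-$.

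The crux is the identity
\begin{equation*}
\bb[v,R'(\la)f]\ =\ (v,f)_H\,,\qquad v\in G_\la,\ f\in H,
\end{equation*}
which I would verify by a direct computation: with $u_\NR=(A_\NR-\la I)^{-1}f$ and $u_\DR=(A_\DR-\la I)^{-1}f$, one first checks $R'(\la)f=u_\NR-u_\DR\in G_\la$, then evaluates $\ab[v,u_\NR]$ by the defining property of $A_\NR$ and $\ab[v,u_\DR]$ using the relation $\bb[v,u_\DR]=0$ (which holds because $v\in H_A^1$ and $u_\DR\in H_0^1$), and subtracts. Since $R'(\la)f\in G_\la$, formula~\eqref{b} and self-adjointness of $\BC_\la$ recast this identity as $\ab[v,\BC_\la R'(\la)f]=(v,f)_H$ for every $v\in G_\la$, which characterises $\BC_\la R'(\la)f$ as $\Pi'_\la A_\NR^{-1}f=:Mf$; thus $R'(\la)=\BC_\la^{-1}M$ and
\begin{equation*}
(R'(\la)f,f)_H\ =\ \ab[\BC_\la^{-1}Mf,\,Mf]\,,\qquad f\in H.
\end{equation*}

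Hence the quadratic form of $R'(\la)$ on $H$ is the pull-back under $M:H\to G_\la$ of the form $\ab[\BC_\la^{-1}\,\cdot\,,\,\cdot\,]$, whose negative index on $G_\la$ equals $\dim G_\la^-$. Because $\DC(A_\NR)$ is $H^1$-dense in $H^1$ and $\Pi'_\la$ is an $H^1$-continuous surjection onto $G_\la$, the range of $M$ is $H^1$-dense in $G_\la$. The upper bound $\dim\chi_{(-\infty,0)}(R'(\la))H\leq\dim G_\la^-$ is immediate: on any subspace $V\subset H$ where the pulled-back form is negative, $M|_V$ is injective and $M(V)$ is a subspace of $G_\la$ on which $\BC_\la^{-1}$ is negative definite. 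The matching lower bound, which is the main technical point, follows by approximation: for any finite-dimensional $V_-\subset G_\la^-$, density of $\mathrm{range}\,M$ in $G_\la$ lets one approximate a basis of $V_-$ by vectors $Mf_j$, and continuity of $\ab[\BC_\la^{-1}\,\cdot\,,\,\cdot\,]$ then ensures that for sufficiently close approximations the $f_j$ are linearly independent in $H$ and $\BC_\la^{-1}$ remains negative definite on $\mathrm{span}\{Mf_j\}$, so the subspace $\mathrm{span}\{f_j\}\subset H$ has $(R'(\la)f,f)_H<0$ throughout. Taking $V_-$ arbitrary handles both $\dim G_\la^-<\infty$ and $\dim G_\la^-=\infty$ uniformly and completes the proof.
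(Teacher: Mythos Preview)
Your argument is correct and follows essentially the same route as the paper: you establish the identity $\BC_\la R'(\la)=\Pi'_\la A_\NR^{-1}$, use Lemma~\ref{l-g0} to invert $\BC_\la$, note that the range of $M=\Pi'_\la A_\NR^{-1}$ is $H^1$-dense in $G_\la$, and then compare negative indices before invoking Theorem~\ref{t2-main}. The only cosmetic difference is that the paper writes the link as $\ab[\BC_\la R'(\la)u,R'(\la)v]=(u,R'(\la)v)$ and compares the form of $\BC_\la$ on the dense range $R'(\la)H\subset G_\la$ with $(R'(\la)\cdot,\cdot)$ on $H$, whereas you pull back the form of $\BC_\la^{-1}$ along $M$; since $\BC_\la$ and $\BC_\la^{-1}$ share the same negative spectral subspace $G_\la^-$, the two formulations are equivalent.
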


Obviously, the number of negative eigenvalues of the operator
$\,(A_\BR-\la I)^{-1}\,$ jumps by
$\,n_\BR(\la_0)+n_{\NR,\DR}(\la_0)\,$ as $\,\la\,$ passes
through an eigenvalue $\,\la_0\,$. Corollary~\ref{c-resolvents}
shows that the corresponding jump for $\,R'(\la)\,$ is equal to
$\,n_\NR(\la_0)-n_\DR(\la_0)\,$, as if $\,R'(\la)\,$ were the
orthogonal sum of the operators $\,(A_\NR-\la I)^{-1}\,$ and
$\,-(A_\DR-\la I)^{-1}\,$.

\subsection{The Dirichlet-to-Neumann map}\label{M3}
In the theory of boundary value problems, it is often possible to
construct a linear isomorphism $W:G_\la\to\Hf\,$, where $\Hf$ is a
Hilbert space of functions defined on the boundary. Then one can
consider the operator $\,W\BC_\la W^{-1}:\Hf\to\Hf\,$ instead of
$\BC_\la$. Clearly, these two operators have the same eigenvalues.
If $H^1$, $H_0^1$ are the Sobolev spaces and $Wv$ is the restriction
of $v$ to the boundary then $W\BC_\la W^{-1}$ is usually called the
Dirichlet-to-Neumann map. This scheme works under certain smoothness
conditions on the boundary and the coefficients, whereas our
approach does not rely on the existence of an auxiliary operator $W$
and does not require any additional assumptions.

\subsection{Applications to boundary value problems}\label{M4}
Let $\,\Om\,$ be an arbitrary open subset of $\R^n\,$ with
$\,n\geq2\,$. Consider a differential operator $\,L\,$ acting from
the space of $\,m$-vector functions $\,C^\infty(\Om,\C^m)\,$ into
the space of $\,l$-vector functions $\,C^\infty(\Om,\C^l)\,$ and
denote by $\,L^*\,$ its formal adjoint. Let us assume that the form
$\,\int_\Om|Lu(x)|^2\,\dr x\,$ with domain
$\,C^\infty(\Om,\C^m)\bigcap L_2(\Om,\C^m)\,$ is strictly positive
and closable in $\,H=L_2(\Om,\C^m)\,$, and denote its closure by
$\,\ab[u]\,$. If $\,H^1:=\DC(\ab)\,$ and $\,H^1_0\,$ is the
$\,H^1$-closure of $\,C_0^\infty(\Om)\,$ then $\,A=L^*L\,$ and
$\,A_\BR\,$ is the differential operator $\,A\,$ with the
corresponding boundary condition.

\begin{corollary}\label{c-diff}
Let $L$ be an operator with constant coefficients. Then
$\,\la_{\NR,k+1}\leq\la_{\DR,k}\,$ for all eigenvalues
$\,\la_{\DR,k}\in(0,\la_{\NR,\infty})\,$. If at least one Dirichlet
eigenfunction corresponding to $\,\la_{\DR,k}\,$ does not satisfy
the Neumann boundary condition then $\,\la_{\NR,k+1}<\la_{\DR,k}\,$.
\end{corollary}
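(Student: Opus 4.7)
The strategy is to appeal to parts (3) and (4) of Remark \ref{r-g0}, which reduce the two assertions of the corollary to exhibiting one, respectively two, vectors $u \in G_{\la_{\DR,k}}$ satisfying $\bb[u] \leq 0$ together with a mild genericity condition. The hypothesis of the second assertion—that some Dirichlet eigenfunction at $\la_{\DR,k}$ does not satisfy the Neumann boundary condition—asserts that this eigenfunction is not a common eigenvector of $A_\DR$ and $A_\NR$, hence contributes non-trivially to $E_\DR(\la_{\DR,k})\HC$ and gives $n_\DR(\la_{\DR,k})\geq 1$, precisely the extra hypothesis needed by Remark \ref{r-g0}(4).

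The candidate vectors are Filonov's plane waves from \cite{Fi}. Since $L$ has constant coefficients, $L\bigl(e^{i\xi\cdot x}w\bigr) = e^{i\xi\cdot x}\hat{L}(\xi)w$ for any $\xi\in\R^n$ and $w\in\C^m$, where $\hat{L}(\xi)$ denotes the symbol matrix of $L$. Fix $\xi$ and a nonzero eigenvector $w$ of $\hat{L}(\xi)^*\hat{L}(\xi)$ with eigenvalue $\la_{\DR,k}$; then $v_\xi(x) := e^{i\xi\cdot x}w$ satisfies $L^*Lv_\xi = \la_{\DR,k}v_\xi$ pointwise. A short verification, using that zero extension maps $H_0^1$ into $H^1(\R^n)$, shows $v_\xi\in G_{\la_{\DR,k}}$, and a direct computation gives
\[
\bb[v_\xi] \ =\ \int_\Om\bigl(|\hat{L}(\xi)w|^2 - \la_{\DR,k}|w|^2\bigr)\,\dr x \ =\ 0.
\]
The existence of such a $\xi$ follows from Plancherel's formula applied to a zero-extended Dirichlet eigenfunction $\vf$ at $\la_{\DR,k}$: the identity $\int_\Om|L\vf|^2\,\dr x = \int_{\R^n} \hat\vf(\xi)^*\hat{L}(\xi)^*\hat{L}(\xi)\hat\vf(\xi)\,\dr\xi$ presents $\la_{\DR,k}$ as a weighted average of eigenvalues of $\hat{L}(\xi)^*\hat{L}(\xi)$, placing $\la_{\DR,k}$ in the spectrum of this symbol family for some $\xi$.

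The genericity step is where the hypothesis $n\geq 2$ enters: the admissible $\xi$ sweep out a real variety of positive dimension, hence is infinite, and plane waves with distinct $\xi$ are linearly independent in $L_2(\Om)$. Since the Dirichlet and Neumann eigenspaces at $\la_{\DR,k}$ are finite-dimensional (both lying below $\la_{\NR,\infty}$), I can choose $\xi_1$ so that $v_{\xi_1}\notin \DC(A_\DR)$; Remark \ref{r-g0}(3) then yields $\la_{\NR,k+1}\leq\la_{\DR,k}$. For the strict inequality I select a second $\xi_2$ so that the two-dimensional span of $v_{\xi_1}$ and $v_{\xi_2}$ intersects the Neumann eigenspace at $\la_{\DR,k}$ only at the origin, and apply Remark \ref{r-g0}(4).

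The main obstacle is the extraction step itself—producing $\xi$ with $\la_{\DR,k}\in\si(\hat{L}(\xi)^*\hat{L}(\xi))$ from the Plancherel average. The average only places $\la_{\DR,k}$ in the convex hull of the joint spectrum, so the argument hinges on connectedness of this joint spectrum as $\xi$ varies over $\R^n$—expected for a constant-coefficient $L$—or on a continuous deformation along the variety of $\xi$ producing a crossing. A secondary technical point is verifying $v_\xi\in L_2(\Om)$; this is automatic whenever the hypothesis $\la_{\DR,k}<\la_{\NR,\infty}$ is nonvacuous, since infinite-volume $\Om$ typically forces $\la_{\NR,\infty}=0$ and renders the interval $(0,\la_{\NR,\infty})$ empty.
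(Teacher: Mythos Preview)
Your overall strategy---plane waves $e^{ix\cdot\xi}\vec c\in G_{\la_{\DR,k}}$ with $\bb[\cdot]=0$, fed into Remark~\ref{r-g0}---is exactly the paper's. The gap you flag in the Plancherel extraction step is real, but it is also self-inflicted: the paper bypasses the averaging argument entirely. Writing $a(\xi)$ for the symbol of $L^*L$ with eigenvalues $\la_j(\xi)$, the paper observes directly that $\ab[u]\geq\la_*\|u\|^2$ on $C_0^\infty(\Om)$, where $\la_*:=\min_j\inf_\xi\la_j(\xi)$, so every Dirichlet eigenvalue satisfies $\la_{\DR,k}>\la_*$. Continuity of the $\la_j(\xi)$ together with $n\geq2$ then produces infinitely many $\xi$ with $\det(a(\xi)-\la_{\DR,k}I)=0$. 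Your Plancherel identity, once unpacked, delivers precisely the inequality $\la_{\DR,k}\geq\la_*$ and nothing more; the ``convex hull'' detour adds no information and can be dropped.

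There is also a minor structural difference in the endgame. Instead of invoking Remark~\ref{r-g0}(3)--(4) and hand-picking plane waves that avoid $\DC(A_\DR)$ or whose span misses the Neumann eigenspace, the paper argues that an infinite linearly independent family in $G_\la$ with $\bb=0$ forces the dichotomy $\dim G_\la^-\geq1$ or $\dim G_\la^0=\infty$; Lemma~\ref{l-g0} excludes the second alternative for $\la<\la_{\NR,\infty}$, so $p_k\geq1$, and then Remark~\ref{r-g0}(1)--(2) finish both assertions at once. This route avoids the genericity bookkeeping in your last two paragraphs.
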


\begin{remark}\label{friedlender1}
Our proof of Corollary~\ref{c-diff} uses the exponential functions
$\,u_\xi(x)=e^{ix\cdot\xi}\,$ and is very similar to the proof of
the Payne conjecture given in \cite{Fr1}. The main difference is
that L. Friendlender considered the Dirichlet-to-Neumann map and
therefore had to assume that the boundary is smooth enough.
\end{remark}

\begin{remark}\label{friedlender2}
If $A$ is the Laplacian on a convex $\,n$-dimensional domain with
sufficiently smooth boundary then $\,\la_{\NR,k+n}<\la_{\DR,k}\,$.
This estimate was obtained in \cite{LW}. Later L. Friedlander found
another proof, based on the fact that $\,G_\la^0\bigcup G_\la^-\,$
contains all first order derivatives $D_ju$ of the Dirichlet
eigenfunctions $u\in E_\DR(\la)H$ (the derivatives obviously belong
to $G_\la$, and the estimate $\bb[D_ju]\leq0$ is a consequence of
the convexity). The inclusion $D_ju\in G_\la^0\bigcup G_\la^-\,$
also implies that $\,N_\NR(\la)\geq N_\DR(\la)+2n_\DR(\la)\,$ (see
\cite{Fr2} for details).
\end{remark}

\section{Further notation and auxiliary results}\label{A}

\subsection{}\label{A1}
The inverse $A_\NR^{-1}$ is a bounded self-adjoint operator in
$H^1\,$ because $\,\ab[A_\NR^{-1}u,v]=(u,v)\,$ for all $\,u,v\in
H^1\,$. Since $\,\ab[u,v]=\la\,(u,v)\,$ for all $\,v\in
E_\NR(\la)H\,$ and $\,v\in H^1\,$, its spectral projections
$\,E_\NR(\la)\,$ are $H^1$-orthogonal.

Let
\begin{enumerate}
\item[$\bullet$] $\,\Pi_0\,$ be the orthogonal projection
    in $\,H^1\,$ onto $\,H_0^1\,$.
\end{enumerate}
From the definition of $G_z$ it clear that $\,\Pi'_0=I-\Pi_0\,$
(this well known result can be found, for example, in \cite{K} or
\cite[Chapter 10, Section 3]{BS}). Since
$\ab[A_\DR^{-1}u,v]=(u,v)=\ab[A_\NR^{-1}u,v]$ for all $u\in H^1$ and
$v\in H_0^1$, we have $\,A_\DR^{-1}=\Pi_0A_\NR^{-1}\,$ and
$\DC(A_\DR)=\Pi_0\DC(A_\NR)$. The following simple lemma is also
well known in the theory of self-adjoint extensions.

\begin{lemma}\label{l-a2}
We have $\,H_A^1=G_0\dotplus\DC(A_\BR)\,$. If $\,w_0\in G_0\,$ and
$\,w_\BR\in\DC(A_\BR)\,$ then $\,A(w_0+w_\BR)=A_\BR w_\BR\,$.
\end{lemma}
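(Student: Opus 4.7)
The plan is to decompose an arbitrary $u\in H_A^1$ using the bounded inverse $A_\BR^{-1}:H\to\DC(A_\BR)$ recalled at the start of Subsection~\ref{A1} (where in particular $A_\DR^{-1}=\Pi_0A_\NR^{-1}$): I would take $w_\BR:=A_\BR^{-1}(Au)$ as the $\DC(A_\BR)$-component and $w_0:=u-w_\BR$ as the $G_0$-component, and then verify that these really land in the stated subspaces.

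The one preliminary observation on which everything rests is that $\DC(A_\BR)\subset H_A^1$ and $A|_{\DC(A_\BR)}=A_\BR$. Indeed, for $u\in\DC(A_\BR)$ and $v\in H_0^1$ the relation $\ab[u,v]=(A_\BR u,v)$ both verifies the $H$-continuity condition in the definition of $H_A^1$ and, together with $(Au,v)=\ab[u,v]$ and the density of $H_0^1$ in $H$, identifies $Au$ with $A_\BR u$. Granted this, existence falls out by a short computation: for $w_\BR$ and $w_0$ as above,
\[
\ab[w_0,v]\ =\ \ab[u,v]-\ab[w_\BR,v]\ =\ (Au,v)-(A_\BR w_\BR,v)\ =\ 0,\qquad v\in H_0^1,
\]
so $w_0\in H_A^1$ with $Aw_0=0$ (using density of $H_0^1$ in $H$ once more), i.e.\ $w_0\in G_0$. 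The same identification $A|_{\DC(A_\BR)}=A_\BR$ together with linearity of $A$ yields the second assertion $A(w_0+w_\BR)=Aw_0+Aw_\BR=0+A_\BR w_\BR$.

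For the sum to be direct it remains to check $G_0\bigcap\DC(A_\BR)=\{0\}$. Any $u$ in this intersection satisfies $A_\BR u=Au=0$, so the strict positivity of $\ab$ (the same hypothesis that makes $H^1$ into a Hilbert space under $\ab[\cdot,\cdot]$ and underwrites the bounded $A_\BR^{-1}$ used above) forces $u=0$. The only step demanding any care is the preliminary identification $A|_{\DC(A_\BR)}=A_\BR$, which has to be checked separately for $\BR=\NR$ and $\BR=\DR$ since $\DC(A_\DR)\subset H_0^1$ while $\DC(A_\NR)\subset H^1$; beyond that, the argument is routine bookkeeping with the form $\ab$.
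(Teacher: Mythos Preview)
Your proof is correct and follows essentially the same route as the paper's. In both cases the $\DC(A_\BR)$-component of $v\in H_A^1$ is $A_\BR^{-1}(Av)$; the paper phrases this via the $H^1$-orthogonal projection $\Pi_0'$ onto $G_0$ (so that membership of the remainder in $G_0$ is automatic), whereas you verify $w_0\in G_0$ by the direct form computation and also make the directness $G_0\cap\DC(A_\BR)=\{0\}$ explicit, but the underlying argument is the same.
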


\begin{proof}
Obviously, $\,G_0\dotplus\DC(A_\BR)\subset H_A^1\,$. On the other
hand, if $v\in H_A^1$ then there exists $\tilde v\in H\,$ such that
$\,(u,\tilde v)=\ab[u,v]$ for all $u\in H_0^1$. Since
$\,(u,v)=\ab[u,A_\NR^{-1}v]\,$, this implies that
$\,\Pi_0v=\Pi_0A_\NR^{-1}\tilde v=A_\DR^{-1}\tilde v\,$. Therefore
$\,v=\Pi'_0v+A_\DR^{-1}\tilde v\,$ and
$\,v=\Pi'_0\left(v-A_\NR^{-1}\tilde v\right)+A_\NR^{-1}\tilde v\,$.
These equalities imply the first statement.

If $w_0\in G_0$ and $w_\BR\in\DC(A_\BR)$ then
$\,\ab[w_0+w_\BR,v]=\ab[w_\BR,v]=(A_\BR w_\BR,v)\,$, for all $\,v\in
H_0^1\,$. This proves the second statement.
\end{proof}

\subsection{}\label{A2} By Lemma~\ref{l-a2}, $H_A^1$ is dense in $H^1$. Since
$$
\ab[A_\DR^{-1}Au,v]=\ab[A_\DR^{-1}Au,\Pi_0v] =(A
A_\DR^{-1}Au,\Pi_0v)=(Au,\Pi_0v)=\ab[\Pi_0u,v]
$$
for all $\,u,v\in H_A^1\,$, this implies that
$A_\DR^{-1}A=\left.\Pi_0\right|_{H_A^1}$. Consequently,
\begin{multline}\label{a1}
G_z\ :=\ \ker(A-zI)\ =\ \ker A_\DR^{-1}(A-zI)\\ =\
\ker(\Pi_0-zA_\DR^{-1})\ =\ \ker\Pi_0(I-zA_\BR^{-1})\,,\qquad\forall
z\in\C\,.
\end{multline}
By \eqref{a1}, we have $(I-zA_\BR^{-1})\,G_z\subset
G_0\bigcap(I-zA_\BR^{-1})H^1$. On the other hand, if
$(I-zA_\BR^{-1})u\in G_0$ then $u\in G_z$ because
$(A-zI)u=A(I-zA_\BR^{-1})u=0$. Therefore
\begin{equation}\label{a2}
(I-zA_\BR^{-1})\,G_z\ =\
G_0\bigcap(I-zA_\BR^{-1})H^1\,,\qquad\forall z\in\C\,.
\end{equation}

Let
\begin{enumerate}
\item[$\bullet$] $R_\BR(z):=(A_\BR-zI)^{-1}\,$ be the resolvent of $A_\BR$.
\end{enumerate}
For each $z\not\in\si_\ess(A_\BR)$, the operator
$\,R_\BR(z)E'_\BR(z)\,$ is bounded  from $H$ to $H^1\,$,
$$
\ker\left(R_\BR(z)E'_\BR(z)\right)=E_\BR(z)H\,,\qquad
R_\BR(z)E'_\BR(z)H\subset E'_\BR(z)\DC(A_\BR)\subset H_A^1
$$
and $\,(A-zI)R_\BR(z)E'_\BR(z)=E'_\BR(z)\,$. We also have
\begin{equation}\label{a3}
\left.(I-zA_\BR^{-1})^{-1}\right|_{E'_\BR(z)H}\ =\
\left(I+zR_\BR(z)\right)E'_\BR(z)\,,\qquad\forall
z\not\in\si_\ess(A_\BR)\,,
\end{equation}
where the operators in the right and left hand sides map
$\,E'_\BR(\la)H^1\,$ onto $\,E'_\BR(\la)H^1\,$ and are
$\,H^1$-bounded. This implies that
$\,(I-zA_\BR^{-1})H^1=E'_\BR(z)H^1\,$ and, in view of \eqref{a2},
\begin{equation}\label{a4}
(I-zA_\BR^{-1})\,G_z\ =\ G_0\bigcap E'_\BR(z)H^1\,, \qquad\forall
z\not\in\si_\ess(A_\BR)\,.
\end{equation}

\subsection{}\label{A3}
Denote $\,T_z:=\left.(I-zA_\NR^{-1})\right|_{H_0^1\bigcap\HC}\,$ and
$\,T_z^\star:=\left.\Pi_0(I-zA_\NR^{-1})\right|_\HC\,$. Let
$\,\Sigma\,$ be the set of points $\,z\in\C$ such that the spectrum
of the operator $\,T_z^\star\,T_z:H_0^1\bigcap\HC\to
H_0^1\bigcap\HC\,$ contains the point $\,0\,$, and let
\begin{enumerate}
\item[$\bullet$]
$\,\Pi(z):=T_z\,(T_z^\star\,T_z)^{-1}\,T_z^\star\,$ and
$\,\Pi'(z):=I-\Pi(z)\,$, where $\,z\in\C\setminus\Sigma\,$.
\end{enumerate}
By \eqref{a1}, we have $\,G_z\bigcap\HC=\ker T_z^\star\,$. Since
$\,T_z^\star\Pi'(z)=0\,$ and $\,\Pi'(z)u=u\,$ for all $\,u\in\ker
T_z^\star\,$, this implies that $\,\Pi'(z)\,$ is a projection onto
$\,G_z\bigcap\HC\,$ in $\,H^1\bigcap\HC\,$. Its $H^1$-adjoint
coincides with $\,\Pi'(\bar z)\,$; in particular, $\,\Pi'(\la)\,$ is
the $\,H^1$-orthogonal projection onto $\,G_z\bigcap\HC\,$ in
$\,\HC\,$. Thus we obtain
\begin{equation}\label{a5}
\Pi'_\la\ =\ \Pi'(\la)\oplus
\left.E_\BR(\la)\right|_{H^1\bigcap\HC_0}\,,\qquad\forall\la\in\R\setminus\Sigma\,.
\end{equation}

\subsection{}\label{A4}
If $z\in\C\setminus\si_\ess(A_\BR)$, let
\begin{enumerate}
\item[$\bullet$]
$P_\BR(z):=R_\BR(z)E'_\BR(z)\,(A-zI)\,$ and
$\,P'_\BR(z):=I-P_\BR(z)\,$;
\item[$\bullet$]
$\,G_{z,B}:=\{v\in H_A^1:(A-zI)v\in E_\BR(z)H\}\,$.
\end{enumerate}
The operators $\,P_\BR(z)\,$ and $\,P'_\BR(z)\,$ are projections in
$\,H_A^1\,$ because
\begin{multline*}
P^2_\BR(z)\ =\
R_\BR(z)E'_\BR(z)\,(A_\BR-zI)R_\BR(z)E'_\BR(z)\,(A-zI)\\ =\
E'_\BR(z)R_\BR(z)E'_\BR(z)\,(A-zI)=P_\BR(z)\,.
\end{multline*}
One
can easily show that $\,P_\BR(z)H_A^1=E'_\BR(z)\DC(A_\BR)\,$ and
$\,P'_\BR(z)H_A^1=G_{z,B}\,$. The subspace $\,G_{z,B}\,$ is the
inverse image of $\,E_\BR(z)H\,$ by the map $\,A-zI\,$, whereas
$\,G_z\,$ is the kernel of $\,A-zI\,$. Therefore $\,G_z\subset
G_{z,B}\,$ and the dimension of the quotient space $\,G_{z,B}/G_z\,$
does not exceed $\,n_\BR(z)+n_{\NR,\DR}(z)\,$. This implies that the
subspaces $\,G_{z,B}\,$ are $\,H^1$-closed for all
$\,z\in\C\setminus\si_\ess(A_\BR)\,$.

If $z\not\in\si(A_\BR)$ then $\,P_\BR(z)H_A^1=\DC(A_\BR)\,$ and
$\,P'_\BR(z)H_A^1=G_z=G_{z,B}\,$. In particular,
$\,\left.P_\DR(0)\right|_{H_A^1}=\left.\Pi_0\right|_{H_A^1}\,$,
$\,\left.P'_\DR(0)\right|_{H_A^1}=\left.\Pi'_0\right|_{H_A^1}\,$ and
$\,H_A^1=P'_\BR(0)H_A^1\dotplus P_\BR(0)H_A^1\,$ is the
decomposition discussed in Lemma~\ref{l-a2}. By direct calculation,
if $\,u,v\in H_A^1$ and $\la,\mu\in\R\,$ then
\begin{multline}\label{a6}
\bb[P'_\NR(\la)u,P'_\NR(\mu)v]\\
=\ \bb[u,v]-\bigl(u,E'_\NR(\mu)(A-\mu I)v\bigr)+\bigl((A-\mu
I)u,R_\NR(\mu)E'_\BR(z)(A-\mu I)v\bigr)
\end{multline}
and
\begin{multline}\label{a7}
\bb[P'_\DR(\la)u,P'_\DR(\mu)v]\\
=\ \bb[u,v]+\bigl(E'_\DR(\la)(A-\la
I)u,v\bigr)-\bigl(R_\DR(\la)E'_\BR(z)(A-\la I)u,(A-\la I)v\bigr).
\end{multline}

\section{Proofs of main results}\label{Proofs}

\subsection{Proof of Lemma~\ref{l1-projections}}\label{Proofs1}

The operator-valued function $T_z^\star\,T_z$ is analytic.
Therefore, for each $\,z_0\not\in\Sigma\,$, the inverse
operator $\,(T_z^\star\,T_z)^{-1}\,$ exists and analytically
depends on $z$ in a sufficiently small neighbourhood of
$\,z_0\,$ (see, for instance, \cite[Section 1.8]{Ya}). Thus it
is sufficient to show that
$\,\Sigma\subset\left(\si_\ess(A_\NR)\bigcap\si_\ess(A_\DR)\right)\,$.

Let us fix $\,\la\in\R\,$, and let $\,u\in\HC\,$. Since
$\,T_\la^\star\,$ is the $\,H^1$-adjoint to $\,T_\la\,$, we have
$T_\la^\star\,T_\la u=0$ if and only if $T_\la u=0$. The latter
means that $u\in\DC(A_\NR)\bigcap\DC(A_\DR)$ and $A_\NR u=A_\DR
u=\la u$. Since $\,u\in\HC\,$, it is only possible if $u=0$. This
implies that $\,\ker(T_\la^\star\,T_\la)=\{0\}\,$.

Assume that the essential spectrum of the operator
$T_\la^\star\,T_\la$ contains the point $\,0\,$. Then, for any given
finite dimensional subspace $\,\LC$, there exists a sequence of
$\,H^1$-orthogonal vectors $\,u_n\in H_0^1\bigcap\HC\,$ such that
$\,u_n\,$ are $\,H$-orthogonal to $\,\LC\,$, $\ab[u_n]=1$ and
$\,\ab[T_\la^\star\,T_\la u_n,u_n]=\ab[T_\la u_n]\to0\,$ as
$n\to\infty$. Clearly, $\,T_\la u_n=u_n-\la\,A_\NR^{-1}u_n\to0\,$
and $\,\Pi_0T_\la u_n=u_n-\la\,A_\DR^{-1}u_n\to0\,$ in $H^1$. If
$\,\la\not\in\si_\ess(A_\BR)\,$ then, by \eqref{a3}, we have
$\,\ab\left[u-\la\,A_\BR^{-1}u\right]\geq C\,\ab[u]\,$ with some
positive constant $\,C\,$ for all vectors $\,u\in H^1\,$ which are
$\,H$-orthogonal to $\,\LC=E_\BR(\la)H\,$. This implies that
$\,\la\in\si_\ess(A_\NR)\bigcap\si_\ess(A_\DR)\,$.

\subsection{Proof of the estimate~\eqref{filonov}}\label{Proofs2}
We have
\begin{eqnarray}
\ab[u,v]\ =\ \la\,(u,v)\,, &\forall u\in E_\NR(\la)H\,, \ \forall
v\in H^1\,,\label{p1}\\
\ab[u,v]\ =\ \la\,(u,v)\,, &\forall u\in \DC(A_\DR)\,,\ \forall v\in
G_\la\,.\label{p2}
\end{eqnarray}
Let $u=u_1+u_2+u_3\,$, where $u_1\in\chi_{[0,\la]}(A_\DR)H$, $u_2\in
E_\NR(\la)$ and $u_3\in G_\la^0+G_\la^-$. Then \eqref{p1} and
\eqref{p2} imply
$$
\ab[u]-\la\,\|u\|^2\ =\
\ab[u_1]-\la\,\|u_1\|^2+\ab[u_3]-\la\,\|u_3\|^2\ =\ \left((A_\DR-\la
I)u_1,u_1\right)+\ab[\BC_\la u_3,u_3]\ \leq\ 0\,.
$$

\subsection{Proof of Lemma~\ref{l-g0}}\label{Proofs3}
The inclusion \eqref{eq-g0} immediately follows from \eqref{p1} and
\eqref{p2}.

Assume that $\,\la\not\in\si_\ess(A_\NR)\bigcap\si_\ess(A_\DR)\,$
and $\,u\in G_\la^0\,$. Then $\,(I-\la A_\NR^{-1})u\,$ is
$\,H^1$-orthogonal to $\,G_\la\,$. In view of \eqref{a5}, this means
that $\,(I-\la A_\NR^{-1})u=\Pi(\la)v\,$ for some $\,v\in
H^1\bigcap\HC\,$. Therefore $\,(I-\la A_\NR^{-1})u=(I-\la
A_\NR^{-1})w\,$ for some $\,w\in H_0^1\,$, which is equivalent to
the inclusion $\,u\in
G_\la\bigcap\left(E_\NR(\la)H+H_0^1\right)=E_\NR(\la)H+E_\DR(\la)H\,$.

Assume now that
$\,\la\not\in\si_\ess(A_\NR)\bigcup\si_\ess(A_\DR)\,$. If the point
$\,0\,$ belongs to the essential spectrum of the operator
$\,\BC_\la\,$ then there exists a sequence of vectors $\,u_n\in
G_\la\ominus G_\la^0\,$ such that $\,\ab[u_n]=1\,$ and
$\,\ab[\BC_\la u_n]\to0\,$ as $\,n\to\infty\,$. Moreover, since
$\,\dim E_\NR(\la)H<\infty\,$, we can choose the sequence
$\,\{u_n\}\,$ in such a way that
$$
u_n-(T_\la^\star\,T_\la)^{-1}\,\Pi_0(I-\la A_\NR^{-1})^2 u_n\ \in\
E'_\NR(\la)\,,\qquad\forall n=1,2,\ldots,
$$
where $\,T_\la\,$ and $\,T_\la^\star\,$ are the operators defines in
Subsection~\ref{A3}. Then, by \eqref{a5},
\begin{multline*}
\BC_\la u_n\ =\ (I-\la A_\NR^{-1})u_n-\Pi_\la(I-\la A_\NR^{-1})u_n\\
=\ \left(I-\la A_\NR^{-1}\right)\left(u_n
-(T_\la^\star\,T_\la)^{-1}\,\Pi_0(I-\la A_\NR^{-1})^2u_n\right)\
\to\ 0
\end{multline*}
in $\,H^1\,$, and \eqref{a3} implies that
\begin{multline*}
\ab\left[u_n -(T_\la^\star\,T_\la)^{-1}\,\Pi_0(I-\la
A_\NR^{-1})^2u_n\right]\\ =\
\ab\left[\Pi'_0u_n\right]+\ab\left[\Pi_0u_n
-(T_\la^\star\,T_\la)^{-1}\,\Pi_0(I-\la A_\NR^{-1})^2u_n\right]\
\to\ 0\,.
\end{multline*}
Therefore $\,\ab[\Pi'_0u_n]=\ab[(I-A_\DR^{-1}A)u_n]= \ab[(I-\la
A_\DR^{-1})u_n]\to0\,$ as $\,n\to\infty\,$. However, this is not
possible because $\,\la\not\in\si_\ess(A_\DR)\,$ and $\,u_n\,$ are
orthogonal to $\,E_\DR(\la)H\subset G_\la^0\,$. The obtained
contradiction proves the second statement of the lemma.

\subsection{Proof of Theorem~\ref{t1-main}}\label{Proofs4}
If $\,\la\not\in\si(A_\NR)\bigcup\si(A_\DR)\,$ then the theorem is
obvious because, in view of Lemmas \ref{l-g0} and
\ref{l1-projections}, we have
$\,n_\NR(\la)=n_\DR(\la)=n_{\NR,\DR}(\la)=0\,$ and
$\,(-\eps,\eps)\bigcap\si(\BC_\mu)=\varnothing\,$ for all
sufficiently small $\,\eps,\de>0\,$ and all
$\,\mu\in(\la-\eps,\la+\eps)\,$.

Suppose that $\,\la\,$ is an isolated eigenvalue. The first
statement of the theorem is an immediate consequence of
Lemma~\ref{l-g0}, so we only need to prove (2) and (3). Let us
choose $\,\eps\,$ and $\,\de\,$ as explained in Remark~\ref{r-main}
and assume, in addition, that $\,\de\,$ is so small that
$\,\la-\de>0\,$ and the interval $\,[\la-\de,\la+\de]\,$ does not
contain any points from $\,\si(A_\NR)\bigcup\si(A_\DR)\,$ with the
exception of $\,\la\,$.

Let $\,\LC_\mu\,$ be the subspace of $\,G_\mu\bigcap\HC\,$
spanned by the eigenfunction corresponding to the eigenvalues
$\,\nu_j(\mu)\,$ (see Remark~\ref{r-main}). By
Lemma~\ref{l-g0}, we have
$$
\LC_\la\ =\ E_\NR(\la)\HC\dotplus E_\DR(\la)\HC\ \subset\ G_\la^0\,.
$$
Therefore $\,E'_\BR(\la)\LC_\la\subset\LC_\la\,$, $\,\dim
E'_\NR(\la)\LC_\la=n_\DR(\la)\,$ and $\,\dim
E'_\DR(\la)\LC_\la=n_\NR(\la)\,$.

We are going to show that
\begin{multline}\label{p3}
\left|\,\ab\left[\BC_\mu
P'_\NR(\mu)u,\chi_{(-\eps,\eps)}(\BC_\mu)P'_\NR(\mu)u\right]-
(\mu-\la)\,\|u\|^2\,\right|\\
\leq\ C\,(\la-\mu)^2\,\|u\|^2\,,\qquad\forall u\in
E'_\NR(\la)\LC_\la
\end{multline}
and
\begin{multline}\label{p4}
\left|\,\ab\left[\BC_\mu
P'_\DR(\mu)u,\chi_{(-\eps,\eps)}(\BC_\mu)P'_\DR(\mu)u\right]-
(\la-\mu)\,\|u\|^2\,\right|\\
\leq\ C\,(\la-\mu)^2\,\|u\|^2\,,\qquad\forall u\in
E'_\DR(\la)\LC_\la\,,
\end{multline}
where $\,C\,$ is a constant independent of $\,u\,$ and
$\,\mu\in(\la-\eps,\la+\eps)\,$. From \eqref{p3} and \eqref{p4} it
follows that
\begin{align}
\label{p5} (\mu-\la)\,\ab[\BC_\mu w,w]\ &\geq\ 0\,,\qquad \forall
w\in\chi_{(-\eps,\eps)}(\BC_\mu)P'_\NR(\mu)E'_\NR(\la)\LC_\la\,,\\
\label{p6} (\la-\mu)\,\ab[\BC_\mu w,w]\ &\geq\ 0\,,\qquad \forall
w\in\chi_{(-\eps,\eps)}(\BC_\mu)P'_\DR(\mu)E'_\DR(\la)\LC_\la\,,
\end{align}
whenever $\,|\la-\mu|\,$ is small enough. If $\,\mu=\la\,$ then
$\,\chi_{(-\eps,\eps)}(\BC_\mu)P'_\BR(\mu)E'_\BR(\la)u=E'_\BR(\la)u\,$
for all $\,u\in\LC_\la\,$. By continuity, we have
$$
\dim\chi_{(-\eps,\eps)}(\BC_\mu)P'_\BR(\mu)E'_\BR(\la)\LC_\la\ =\
\dim E'_\BR(\la)\LC_\la\ =\ n_\NR(\la)+n_\DR(\la)-n_\BR(\la)
$$
for all $\,\mu\,$ sufficiently close to $\,\la\,$. Therefore the
estimates \eqref{p5} and \eqref{p6} imply the theorem (with another
positive $\,\de\,$).

In order to prove \eqref{p3} and \eqref{p4}, note that
$\,\bb[u]=0\,$ for all $\,u\in\LC_\la\,$ and, in view of \eqref{a6}
and \eqref{a7},
\begin{multline}\label{p7}
\ab\left[\BC_\mu P'_\NR(\mu)u,P'_\NR(\mu)u\right]\ =\
\bb[P'_\NR(\mu)u]\\ =\ (\mu-\la)\,\|u\|^2
+(\la-\mu)^2\,\bigl(u,R_\NR(\mu)u\bigr)\,,\qquad\forall u\in\LC_\la
\end{multline}
and
\begin{multline}\label{p8}
\ab\left[\BC_\mu P'_\DR(\mu)u,P'_\DR(\mu)u\right]\ =\
\bb[P'_\DR(\mu)u]\\ =\ (\la-\mu)\,\|u\|^2
-(\la-\mu)^2\,\bigl(u,R_\DR(\mu)u\bigr)\,,\qquad\forall
u\in\LC_\la\,.
\end{multline}
Therefore, for all $\,\mu\in(\la-\de,\la+\de)\,$ we have
\begin{multline}\label{p9}
\left|\,\ab\left[\BC_\mu
P'_\NR(\mu)u,P'_\NR(\mu)u\right]-(\mu-\la)\,\|u\|^2\,\right|\\ \leq\
C_1^{-1}\,(\la-\mu)^2\,\|u\|^2\,,\qquad\forall u\in
E'_\NR(\la)\LC_\la\,,
\end{multline}
and
\begin{multline}\label{p10}
\left|\,\ab\left[\BC_\mu
P'_\DR(\mu)u,P'_\DR(\mu)u\right]-(\la-\mu)\,\|u\|^2\,\right|\\ \leq\
C_1^{-1}\,(\la-\mu)^2\,\|u\|^2\,,\qquad\forall u\in
E'_\DR(\la)\LC_\la\,,
\end{multline}
where $\,C_1\,$ is the distance from $\,[\la-\de,\la+\de]\,$ to
$\,\left(\si(A_\NR\bigcup\si(A_\DR)\right)\setminus\{\la\}\,$.

Let $\,S_\BR\,$ be the projections onto $\,E_\BR(\la)\HC\,$ in
$\,\LC_\la\,$ such that $\,S_\NR E_\DR(\la)=0\,$ and $\,S_\DR
E_\NR(\la)=0\,$. Then $\,u=S_\NR u+S_\DR u\,$ for all
$\,u\in\LC_\la\,$. Since $\,\dim\LC_\la<\infty\,$ and
$\,E_\NR(\la)\HC\bigcap E_\DR(\la)\HC=\{0\}\,$, the projections
$\,S_\BR\,$ are well defined and bounded as operators from $\,H\,$
to $\,H^1\,$.

If $\,u\in\LC_\la\,$ then $\,P'_\BR(\mu)u=P'_\BR(\mu)(u-S_\BR
u)=(I-(\la-\mu)R_\BR(\mu))(u-S_\BR u)\,$ for all $\,\mu\ne\la\,$
and, by \eqref{b},
\begin{align*}
\ab\left[\BC_\mu P'_\NR(\mu)u,v\right]\ &=\
\ab\left[P'_\NR(\mu)u,v\right]-\mu\left(P'_\NR(\mu)u,v\right)
=(\mu-\la)\,(S_\DR u,v)\,,\qquad\forall v\in G_\mu\,,\\
\ab\left[\BC_\mu P'_\DR(\mu)u,v\right]\ &=\
\ab\left[P'_\DR(\mu)u,v\right]-\mu\left(P'_\DR(\mu)u,v\right)
=(\la-\mu)\,(S_\NR u,v)\,,\qquad\forall v\in G_\mu\,,
\end{align*}
for all $\,\mu\in(\la-\de,\la+\de)\,$. Since $\,(S_\NR
u,v)=\la^{-1}\,\ab[S_\NR u,v]\,$ and $\,(S_\DR
u,v)=\mu^{-1}\,\ab[S_\DR u,v]\,$ whenever $\,v\in G_\mu\,$, the
above identities imply that
\begin{align}
\label{p11} \ab\left[\BC_\mu P'_\NR(\mu)u,v\right]\ &=\
\mu^{-1}(\mu-\la)\,\ab\left[S_\DR u,v\right]\,,\qquad\forall
u\in\LC_\la\,,\ \forall v\in G_\mu\,,\\
\label{p12} \ab\left[\BC_\mu P'_\DR(\mu)u,v\right]\ &=\
\la^{-1}(\la-\mu)\,\ab\left[S_\NR u,v\right]\,,\qquad\forall
u\in\LC_\la\,,\ \forall v\in G_\mu\,.
\end{align}

In view of Lemma~\ref{l1-projections},
$\,\left.\left(I-\chi_{(-\eps,\eps)}(\BC_\mu)\right)P'_\BR(\mu)E'_\BR(\la)\right|_{\LC_\la}\,$
is an analytic operator-valued function of
$\,\mu\in(\la-\de,\la+\de)\,$. Since this operator-valued function
vanishes at $\,\mu=\la\,$, we have
\begin{multline}\label{p13}
\ab\left[\left(I-\chi_{(-\eps,\eps)}(\BC_\mu)\right)P'_\BR(\mu)E'_\BR(\la)u\right]\\
\leq\ C_2\,(\la-\mu)^2\,\ab[u]\ =\
C_2\,(\la-\mu)^2\,\la\,\|u\|^2\,,\qquad\forall u\in\LC_\la\,,
\end{multline}
with some positive constant $\,C_2\,$ independent of $\,\mu$ and
$u\,$. Substituting
$$
v\ =\ \left(I-\chi_{(-\eps,\eps)}(\BC_\mu)\right)P'_\BR(\mu)u
$$
into \eqref{p11}, \eqref{p12} and applying \eqref{p13}, we obtain
\begin{equation}\label{p14}
\ab\left[\BC_\mu
P'_\BR(\mu)u,\left(I-\chi_{(-\eps,\eps)}(\BC_\mu)\right)P'_\BR(\mu)u\right]
\leq C_3\,(\la-\mu)^2\,\|u\|^2\,,\quad\forall u\in
E'_\BR(\la)\LC_\la\,,
\end{equation}
with some constant $\,C_3\,$ independent of $\,\mu$ and $u\,$. Now
\eqref{p3} and \eqref{p4} follow from \eqref{p9}, \eqref{p10} and
\eqref{p14}.

\subsection{Proof of Corollary~\ref{c-main}}\label{Proofs5}

Lemma~\ref{l1-projections} and Theorem~\ref{t1-main} imply that the
function $\,\dim G_\la^-\,$ is constant on every connected component
of the set $\R\setminus(\si(A_\NR)\bigcup\si(A_\DR))$. If
$\,\la\not\in\si_\ess(A_\NR)\bigcup\si_\ess(A_\DR)\,$ and
$\,\la\in\La\,$ is an eigenvalue then, by Theorem~\ref{t1-main}
\begin{equation}\label{p15}
\dim G_\mu^-\ =\ \begin{cases}
\dim G_\la^-+n_\DR(\la)\,,&\forall \mu\in(\la-\de,\la),\\
\dim G_\la^-+n_\NR(\la)\,,&\forall \mu\in(\la,\la+\de)\,,
\end{cases}
\end{equation}
provided that $\,\de>0\,$ is small enough. In other words, the value
of $\,\dim G_\mu^-\,$ jumps by $\,n_\NR(\la)-n_\DR(\la)\,$ as
$\,\mu\,$ passes through the eigenvalue $\,\la\,$, and $\,\dim
G_\la^-=\dim G_{\la-0}^--n_\DR(\la)\,$. Summing up these jumps over
all the eigenvalues lying between $\,a\,$ and $\,b\,$, we obtain
\eqref{eq-main}.

\subsection{Proof of Theorem~\ref{t2-main}}\label{Proofs6}

Let $\,\LC\,$ be the subspace of $\,\chi_{(-\infty,\la)}(A_\NR)H\,$
spanned by all the vectors $v\in\chi_{(-\infty,\la)}(A_\NR)H$ such
that
\begin{equation}\label{p16}
\chi_{(-\infty,\la]}(A_\DR)\,(A-\la I)v\ =\ 0\,.
\end{equation}
The inclusion $\LC\subset\DC(A_\NR)$ implies that $\,\dim\LC\geq
N_\NR(\la)-N_\DR(\la)-n_\DR(\la)\,$ and $\bb[v]=0$ for all
$\,v\in\LC\,$. From the latter identity, \eqref{p16} and \eqref{a7}
it follows that $\,\bb[P'_\DR(\la)v]<\bb[v]\,$ for all nonzero
$\,v\in\LC\,$. Since $\,\left((A-\la I)v,v\right))<0\,$ for all
nonzero $\,v\in\chi_{(-\infty,\la)}(A_\NR)H\,$ and $\,\left((A-\la
I)v,v\right)\geq0\,$ for all $\,v\in\DC(A_\DR)\,$ satisfying
\eqref{p16}, we have $\,\LC\bigcap\DC(A_\DR)=\{0\}\,$. Therefore
$\,\ker\left.P'_\DR(\la)\right|_\LC\subset
L\bigcap\DC(A_\DR)=\{0\}\,$ and, consequently,
$$
\dim P'_\DR(\la)\LC\ \geq\ N_\NR(\la)-N_\DR(\la)-n_\DR(\la)\,.
$$
Thus we have $\,\dim G_\la^-\geq\dim P'_\DR(\la)\LC\geq
N_\NR(\la)-N_\DR(\la)-n_\DR(\la)\,$. Now the theorem follows from
\eqref{upper}.

\subsection{Proof of Corollary~\ref{c-resolvents}}\label{Proofs7}
We have $\,R'(\la)H\subset G_\la\,$ and, by \eqref{b},
\begin{equation}\label{p17}
\ab[\BC_\la R'(\la)u,v]\ =\ \ab[R'(\la)u,v]-\la(R'(\la)u,v)\ =\
(u,v) \ =\ \ab[A_\NR^{-1}u,v]
\end{equation}
for all $\,\forall u\in H\,$ and $\,\forall v\in G_\la\,$. The above
identity implies that $\,\BC_\la R'(\la)u=\Pi'_\la A_\NR^{-1}u\,$
for all $\,u\in H\,$. In view of Lemma~\ref{l-g0}, the operator
$\,\BC_\la\,$ is invertible and, consequently,
$\,R'(\la)=\BC_\la^{-1}\Pi'_\la A_\NR^{-1}\,$. Since
$\,\ab[A_\NR^{-1}u,v]=(u,v)$, the subspace $\,\Pi'_\la
A_\NR^{-1}H\,$ is $H^1$-dense in $\,G_\la\,$. Therefore
$\,R'(\la)H\,$ is an $\,H^1$-dense subspace of $\,G_\la\,$. Finally,
by \eqref{p17},
$$
\ab[\BC_\la R'(\la)u,R'(\la)v]\ =\ (u,R'(\la)v)\,,\qquad\forall
u,v\in H\,.
$$
Thus we have $\,\ab[\BC_\la u,u]<0\,$ on a $k$-dimensional subspace
of $\,G_\la\,$ if and only if $\,(R'(\la)u,u)<0\,$ on a
$k$-dimensional subspace of $\,H\,$. Now the corollary follows from
Theorem~\ref{t2-main}.

\subsection{Proof of Corollary~\ref{c-diff}}\label{Proofs8}
Let $\,a(\xi)\,$ be the full symbol of the operator $\,L^*L\,$, and
let $\,\la_1(\xi),\ldots\la_m(\xi)\,$ be the eigenvalues of
$\,a(\xi)\,$. Then $\,\la_{\DR,k}>\la_*:=\min_j\inf_\xi\la_j(\xi)\,$
for all $\,k\,$ because $\,\ab[u]\geq\la_*\,\|u\|^2\,$ on
$\,C_0^\infty(\Om)\,$. On the other hand, since $\,\la_j(\xi)\,$ are
continuous functions of $\,\xi\,$, the equation $\,\det(a(\xi)-\la
I)=0\,$ has infinitely many $\,\xi$-solutions for each fixed
$\,\la>\la_*\,$. Therefore $\,G_\la\,$ contains an infinite
dimensional set formed by functions of the form
$\,u_\xi=e^{ix\cdot\xi}\,\vec c\,$ where $\,\vec c\in\ker(a(\xi)-\la
I)\,$. For each of these functions we have
$\,\ab[u_\xi]=\la\,\|u_\xi\|^2\,$. This implies that either $\,\dim
G_\la^-\geq1\,$ or $\,\dim G_\la^0=\infty\,$. By Lemma~\ref{l-g0},
the latter is possible only if $\,\la\geq\la_{\NR,\infty}\,$.
Therefore, by Remark~\ref{r-g0}(2), we have
$\,\la_{\NR,k+1}\leq\la_{\DR,k}\,$ for all eigenvalues lying below
$\,\la_{\NR,\infty}\,$. If at least one Dirichlet eigenfunction
corresponding to $\,\la_{\DR,k}\,$ does not satisfy the Neumann
boundary condition then $\,n_\DR(\la_{\DR,k})\geq1\,$ and, by
Remark~\ref{r-g0}(1), $\,\la_{\NR,k+1}<\la_{\DR,k}\,$.

\section{Remarks}\label{R}

\subsection{$A_\DR$ and $A_\NR$ as self-adjoint extensions}\label{R1}
Denote $\,H_0^2:=H_0^1\bigcap\DC(A_\NR)\,$. Since
$\,\DC(A_\DR)=\Pi_0\DC(A_\NR)\,$ (see Subsection \ref{A1}), we have
$\,H_0^2=\DC(A_\DR)\bigcap\DC(A_\NR)\,$.

The $\,H$-adjoint $A^*$ coincides with the restriction of $A$ to
$H_0^2$. Indeed, if $\,(u,Av)=(\tilde u,v)\,$ for some $u,\tilde
u\in H$ and all $\,v\in H_A^1\,$ then, taking $v\in\DC(A_\NR)$ or
$v\in\DC(A_\DR)$, we obtain $\,u=A_\NR^{-1}\tilde u=A_\DR^{-1}\tilde
u\,$. Therefore $\,\DC(A^*)\subset H_0^2\,$. On the other hand, if
$\,u\in H_0^2\,$ then $\,(u,Av)=\ab[u,v]\,$ because $\,u\in H_0^1\,$
and $\,\ab[u,v]=(A_\NR u,v)\,$. Thus $\DC(A^*)=H_0^2$ and
$A^*=\left.A\right|_{H_0^2}$.

If $\,H_0^2\,$ is not dense in $\,H\,$ then the second adjoint
$A^{**}$ does not exist and the operator $\,A\,$ is not
closable in $\,H\,$ (see, for example, \cite[Section 3.3]{BS}).

If $\,H_0^2\,$ is dense in $\,H\,$ then $\,A_\DR\,$ and $\,A_\NR\,$
are self-adjoint extensions of $\,A^*\,$, and $\,A^{**}\,$ is the
closure of $\,A\,$. Note that $\,\DC(A)=H_A^1\,$ may be strictly
smaller than $\,H^1\bigcap\DC(A^{**})\,$. Also, the $\,H^1$-closed
subspaces $\,G_z\,$ may be strictly smaller than
$\,\left.\ker(A^{**}-zI)\right|_{H^1}\,$ and may not be closed in
$\,H\,$ (see the next subsection).

\subsection{An example}\label{R2}

Let $\,\Omega\,$ be a bounded domain with smooth boundary, and let
$\,H=L_2(\Om)\,$ and $\,H^s\,$ be the Sobolev spaces. If
$\,\ab[u]=\|\nabla u\|^2+\|u\|^2\,$ and $\,H_0^1\,$ is the
$\,H^1$-closure of $\,C_0^\infty(\Om)\,$ then $\,A=-\Delta+I\,$,
$\,\DC(A)=\{u\in H^1\,:\,Au\in H\}\,$ and $\,G_0=\{u\in
H^1\,:\,Au=0\}\,$. The self-adjoint operators $\,A_\DR\,$ and
$\,A_\NR\,$ are obtained by imposing the Dirichlet and Neumann
boundary conditions. The $\,H$-adjoint $\,A^*\,$ coincides with the
restriction of $\,A\,$ to $\,H_0^2:=\{u\in
H^2\,:\,\left.u\right|_{\partial\Om}=\left.\partial_n
u\right|_{\partial\Om}=0\}\,$, where $\,\partial_n\,$ is the normal
derivative. The second $\,H$-adjoint $\,A^{**}\,$ is the extension
of $\,A\,$ to $\,\DC(A^{**})=\{u\in H\,:\,Au\in H\}\,$, and
$\,\DC(A)=H^1\bigcap\DC(A^{**})\,$.

Let us choose a nonzero function $\,v_0\in G_0\,$, and define
$\,\tilde H_0^1=H_0^1\oplus\LC_0\,$, where $\,\LC_0\,$ is the one
dimensional subspace spanned by $\,v_0\,$ and $\,\oplus\,$ denotes
the orthogonal sum in $\,H^1\,$. Then the corresponding operator
$\,\tilde A\,$ is the same differential operator $\,-\Delta+I\,$ but
$\,\DC(\tilde A)=\{u\in\DC(A)\,:\,\langle
\partial_n P'_\NR(0)u,v_0\rangle_{\partial\Om}=0\}\,$, where
$\,\langle\cdot,\cdot\rangle_{\partial\Om}\,$ denotes the
sesquilinear pairing between $\,H^{-1/2}(\partial\Omega)\,$ and
$\,H^{1/2}(\partial\Omega)\,$. The Neumann operator remains the
same, and the domain of new ``Dirichlet'' operator is $\,\DC(\tilde
A_\DR)=\DC(\tilde A)\bigcap\tilde H_0^1\,$. Finally,
$$
\DC(\tilde A^*)\ =\ \DC(A_\NR)\bigcap\tilde H_0^1\ =\ \{u\in
H^2\bigcap\tilde H_0^1\,:\,\left.\partial_n
u\right|_{\partial\Om}=0\}
$$
and, consequently, $\,H_0^2\subset\DC(\tilde A^*)\,$.

Let $\,v_0\not\in H^2\,$. Then
$\,\left.v_0\right|_{\partial\Om}\not\in H^{3/2}(\partial\Om)\,$ and
$\,\left.u\right|_{\partial\Om}\not\in H^{3/2}(\partial\Om)\,$ for
all $\,u\in\tilde H_0^1\setminus H_0^1\,$. This implies that
$\,H^2\bigcap\tilde H_0^1=H^2\bigcap H_0^1\,$ and $\,\DC(\tilde
A^*)=\DC(A^*)=H_0^2\,$. Thus we have $\,\tilde A^{**}=A^{**}\,$. By
the above, in this case $\,\DC(\tilde A)\ne H^1\bigcap\DC(\tilde
A^{**})\,$.

The $\,H^1$-orthogonal complement $\,\tilde G_0:=H^1\ominus\tilde
H_0^1=G_0\ominus\LC_0\,$ coincides with the kernel of the functional
$\,u\to\ab[v_0,u]=\langle\partial_nv_0,u\rangle_{\partial\Om}\,$
defined on the space $\,G_0\,$. If $\,v_0\not\in H^2\,$ then this
functional is not $\,H$-continuous and $\,\tilde G_0\,$ is not
$\,H$-closed. Now \eqref{a1} implies that $\,\tilde G_z:=\ker(\tilde
A-zI)\,$ are not $H$-closed for all $\,z\in\C\,$.

\subsection{The projections $P_\BR(\la)$}\label{R3}

Note that, by the spectral theorem, the right hand side of
\eqref{p7} is a nondecreasing function of $\,\mu\,$ and the right
hand side of \eqref{p8} is a nonincreasing function of $\,\mu\,$.
This observation allows one to simplify the proof of
Theorem~\ref{t1-main} in the case where $\,\dim G_\la^-<\infty\,$ or
$\,\dim G_\la^+<\infty\,$. The monotonicity is an implicit
consequence of the following result.

\pagebreak

\begin{lemma}\label{l2-projections}
Let $\La$ be an arbitrary real interval, and let $v\in H_A^1$.
\begin{enumerate}
\item[(1)]
If $\,\La\bigcap\si_\ess(A_\NR)=\varnothing\,$ and
$\,\chi_\La(A_\NR)P'_\NR(0)v=0\,$ then $\,P'_\NR(\la)v\in G_\la\,$
for all $\la\in\La\,$ and $\,\bb[P'_\NR(\la)v]\,$ is a
non\-decreasing function on $\La$.
\item[(2)]
If $\,\La\bigcap\si_\ess(A_\DR)=\varnothing\,$ and
$\,\chi_\La(A_\DR)P'_\DR(0)v=0\,$ then $\,P'_\DR(\la)v\in G_\la\,$
for all $\la\in\La\,$ and $\,\bb[P'_\DR(\la)v]\,$ is a nonincreasing
function on $\La$.
\end{enumerate}
If, in addition, $v\not\in\DC(A_\BR)$ then the function
$\,\bb[P'_\BR(\la)v]\,$ is strictly monotone.
\end{lemma}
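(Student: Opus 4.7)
The plan is to derive an explicit formula for $\bb[P'_\BR(\mu) v]$ on $\La$ and then differentiate in $\mu$.

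Using Lemma~\ref{l-a2}, write $v = w_0 + w_\BR$ with $w_0 := P'_\BR(0) v \in G_0$ and $w_\BR := P_\BR(0) v \in \DC(A_\BR)$; then $Av = A_\BR w_\BR$. For any $\phi \in E_\BR(\la) H$, self-adjointness of $A_\BR$ gives
\[
((A - \la I) v, \phi)\ =\ (A_\BR w_\BR, \phi) - \la (v,\phi)\ =\ \la(w_\BR, \phi) - \la(v,\phi)\ =\ -\la (w_0, \phi).
\]
Combined with the identity $(A-\la I)P'_\BR(\la) v = E_\BR(\la)(A-\la I) v$, which is immediate from the definition of $P_\BR(\la)$, this gives $(A-\la I)P'_\BR(\la) v = -\la E_\BR(\la) w_0$. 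The hypothesis $\chi_\La(A_\BR) w_0 = 0$ forces the right-hand side to vanish for every $\la \in \La$, proving $P'_\BR(\la) v \in G_\la$.

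Next, using $(A-\mu I) v = (A_\BR - \mu I) w_\BR - \mu w_0$ together with the hypothesis in the form $E'_\BR(\mu) w_0 = w_0$ for $\mu \in \La$, a short calculation yields the explicit formula
\[
P'_\BR(\mu) v\ =\ w_0 \,+\, E_\BR(\mu) w_\BR \,+\, \mu R_\BR(\mu) w_0.
\]
Since $P'_\BR(\mu) v \in G_\mu$, I have $\bb[P'_\BR(\mu) v] = \ab[P'_\BR(\mu) v] - \mu\|P'_\BR(\mu) v\|^2$. The summand $E_\BR(\mu) w_\BR$ lies in the $\mu$-eigenspace of $A_\BR$ and is $H$- and $H^1$-orthogonal to the other two; it contributes nothing to $\bb$, reducing the problem to evaluating $\bb[(I + \mu R_\BR(\mu)) w_0]$.

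The essential asymmetry between the two cases appears in the cross term $\ab[w_0, \mu R_\BR(\mu) w_0]$. For Dirichlet, $\mu R_\DR(\mu) w_0 \in \DC(A_\DR) \subset H_0^1$, so this cross term vanishes by the very definition of $G_0$. For Neumann, $\mu R_\NR(\mu) w_0 \in \DC(A_\NR)$ is generally not in $H_0^1$, and the cross term must be evaluated through $\ab[w_0, R_\NR(\mu) w_0] = (w_0, A_\NR R_\NR(\mu) w_0)$. After collecting the remaining terms and applying the spectral calculus of $A_\BR$ to $w_0$ (the hypothesis guaranteeing that every $\mu \in \La$ lies outside the spectral support of $w_0$, so all expressions are bounded), one obtains
\[
\bb[P'_\BR(\mu) v]\ =\ \ab[w_0] \,\pm\, \mu\bigl(w_0, (I + \mu R_\BR(\mu)) w_0\bigr),
\]
with $+$ for $\BR = \NR$ and $-$ for $\BR = \DR$.

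Differentiating in $\mu$, using the resolvent derivative $\frac{d}{d\mu} R_\BR(\mu) = R_\BR(\mu)^2$, and once more invoking the spectral theorem, the derivative collapses to $\pm\|(I + \mu R_\BR(\mu)) w_0\|^2$, which is nonnegative for Neumann and nonpositive for Dirichlet. This proves the stated monotonicity. The derivative is strictly nonzero on $\La$ precisely when $w_0 \notin \ker A_\BR$, which is equivalent to $v \notin \DC(A_\BR)$ (for $v \in \DC(A_\BR)$ a direct calculation gives $P'_\BR(0) v = E_\BR(0) v \in \ker A_\BR$), yielding the strict monotonicity claim. The main technical point is the asymmetric treatment of the cross term $\ab[w_0, \mu R_\BR(\mu) w_0]$, whose vanishing for Dirichlet but not for Neumann is precisely what flips the sign of the monotonicity.
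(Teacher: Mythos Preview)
Your proof is correct and follows essentially the same route as the paper's: both arrive at the identity $\bb[P'_\BR(\mu)v]=\ab[w_0]\pm\bigl(\mu\|w_0\|^2+\mu^2(R_\BR(\mu)w_0,w_0)\bigr)$ with $w_0=P'_\BR(0)v$, and then deduce monotonicity from the spectral theorem. The only difference is cosmetic --- the paper reaches this formula by substituting into the pre-established identities \eqref{a6}, \eqref{a7} (via \eqref{r2}, \eqref{r3}), whereas you first write down the explicit expression $P'_\BR(\mu)v=w_0+E_\BR(\mu)w_\BR+\mu R_\BR(\mu)w_0$ and compute $\bb$ directly; your explicit differentiation to $\pm\|(I+\mu R_\BR(\mu))w_0\|^2$ is what the paper leaves implicit in ``follows from the spectral theorem''.
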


\begin{proof}
Since $\,(A-\la I)P'_\BR(\la)v\ =\ E_\BR(\la)(A-\la I)v\,$, the
equality $\,\chi_\La(A_\BR)P'_\BR(0)v=0\,$ implies that $\,(A-\la
I)P'_\BR(\la)v=-\la E_\BR(\la)P'_\BR(0)v=0\,$. Thus we have
$\,P'_\BR(\la)v\in G_\la\,$ for all $\,\la\in\La\,$.

If $\,w_\BR\in\DC(A_\BR)\,$ then
$\,P'_\BR(\la)w_\BR=E_\BR(\la)w_\BR\,$. Using this identity, one can
easily show that
\begin{eqnarray}
\label{r2}
\bb[P'_\NR(\la)(w+w_\NR)]&=&\bb[P'_\NR(\la)w]-(E_\NR(\la)(A-\la I)w,w_\NR)\,,\\
\label{r3}
\bb[P'_\DR(\la)(w+w_\DR)]&=&\bb[P'_\DR(\la)w]+(w_\DR,E_\DR(\la)(A-\la
I)w)
\end{eqnarray}
for all $\,w\in H_A^1\,$, $w_\NR\in\DC(A_\NR)$ and
$w_\DR\in\DC(A_\DR)\,$. Since $E_\BR(\la)P'_\BR(0)v=0$ and
$\,P'_\BR(\la)P'_\BR(0)=P'_\BR(\la)\,$ for all $\,\la\in\La\,$,
substituting $w=P'_\BR(0)v$, $w_\BR=P_\BR(0)v$ in \eqref{r2},
\eqref{r3} and applying \eqref{a6}, \eqref{a7}, we obtain
\begin{eqnarray*}
&\bb[P'_\NR(\la)v]=\bb[P'_\NR(0)v]+\la\|P'_\NR(0)v\|^2
+\la^2(R_\NR(\la)P'_\NR(0)v,P'_\NR(0)v)\,,&\quad\forall\la\in\La,\\
&\bb[P'_\DR(\la)v]=\bb[P'_\DR(0)v]-\la\|P'_\DR(0)v\|^2
-\la^2(R_\DR(\la)P'_\DR(0)v,P'_\DR(0)v)\,,&\quad\forall\la\in\La.
\end{eqnarray*}
Now the required monotonicity results follow from the spectral
theorem.
\end{proof}

Note that $\,P'_\BR(0)=P'_\BR(0)P'_\BR(\mu)=P'_\BR(\mu)-\mu
A_\BR^{-1}P'_\BR(\mu)\,$ whenever $\,\mu\not\in\si(A_\BR)\,$.
Therefore we have $\,\chi_\La(A_\BR)P'_\BR(0)v=0\,$ if and only if
$\,\chi_\La(A_\BR)P'_\BR(\mu)v=0\,$ for all
$\mu\not\in\si(A_\BR)\,$.

\subsection{Analytic properties of $\Pi(z)$}\label{R4}

If the embedding $H_0^1\hookrightarrow H$ is compact then the
operator-valued functions $\,\Pi(z)\,$ and $\,\Pi'(z)\,$ introduced
in Subsection~\ref{A3} are meromorphic in the whole complex plane.
Indeed, since $\,\ab[A_\DR^{-1}u]=(A_\DR^{-1}u,u)\,$, the
compactness of the embedding $H_0^1\hookrightarrow H$ implies that
$A_\DR^{-1}$ is compact as an operator from $H$ to $H^1$.
Consequently,
$$
T_z^\star\,T_z-I\ =\
\left.z^2\,\Pi_0A_\NR^{-2}\right|_{H_0^1}-\left.2z\,\Pi_0A_\NR^{-1}\right|_{H_0^1}\
=\
\left.z^2\,A_\DR^{-1}A_\NR^{-1}\right|_{H_0^1}-\left.2z\,A_\DR^{-1}\right|_{H_0^1}
$$
are compact operators in $H_0^1$. Now the required result follows
from the analytic Fredholm theorem (see, for example, \cite[Section
1.8]{Ya}).

\end{document}